\newtheorem{theorem}{Theorem}
\newtheorem{remark}[theorem]{Remark}
\newtheorem{lemma}[theorem]{Lemma}
\newtheorem{proposition}[theorem]{Proposition}
\newtheorem{corollary}[theorem]{Corollary}
\newtheorem{example}[theorem]{Example}
\DeclareMathOperator*{\divergenz}{div}              %
\DeclareMathOperator*{\esssup}{ess ~sup}         %
\DeclareMathOperator*{\Ss}{S}
\newcommand{\N}{\mathbb{N}}
\newcommand{\R}{\mathbb{R}}
\newcommand{\Lp}[1]{L^{#1}(\Omega)}
\newcommand{\Lpvalued}[1]{L^{#1}\left(\Omega;\R^N\right)}
\newcommand{\Lprand}[1]{L^{#1}(\partial\Omega)}
\newcommand{\Wp}[1]{W^{1,#1}(\Omega)}
\newcommand{\ran}{\rangle}
\newcommand{\eps}{\varepsilon}
\newcommand{\ph}{\varphi}
\newcommand{\Om}{\Omega}
\newcommand{\rand}{\partial\Omega}
\newcommand{\into}{\int_{\Omega}}
\newcommand{\Linf}{L^{\infty}(\Omega)}
\newcommand{\close}{\overline{\Omega}}
\newcommand{\cprime}{$'$}
\renewcommand{\l}{\left}
\renewcommand{\r}{\right}
\numberwithin{theorem}{section}
\numberwithin{equation}{section}
\title[Moser iteration applied to elliptic equations with critical growth]{Moser iteration applied to elliptic equations with critical growth on the boundary}
\author[G.\,Marino]{Greta Marino}
\address[G.\,Marino]{Dipartimento di Matematica e Informatica, Universit\`{a} degli Studi di Catania, Viale A.\,Doria 6,I-95125 Catania, Italy}
\email{greta.marino@dmi.unict.it}
\author[P.\,Winkert]{Patrick Winkert}
\address[P.\,Winkert]{Technische Universit\"{a}t Berlin, Institut f\"{u}r Mathematik, Stra\ss e des 17.\,Juni 136, 10623 Berlin, Germany}
\email{winkert@math.tu-berlin.de}
\subjclass[2010]{35J60, 35B45, 35J25}
\keywords{Moser iteration, boundedness of solutions, a priori bounds, elliptic operators of divergence type, critical growth on the boundary}
\begin{document}

\begin{abstract}
    This paper deals with boundedness results for weak solutions of the equation
    \begin{align}\label{problem0}\tag{P}
	\begin{aligned}
	    -\divergenz \mathcal{A} (x,u,\nabla u) & = \mathcal{B}(x,u,\nabla u)  & \hspace*{0.5cm} & \text{ in } \Om, \\
	\mathcal{A} (x,u,\nabla u)\cdot \nu & = \mathcal{C}(x,u)  && \text{ on } \partial \Omega,
	\end{aligned}
    \end{align}
    where the functions $\mathcal{A}: \Omega \times \R \times \R^N \to \R^N$, $\mathcal{B}: \Omega \times \R \times \R^N \to \R$, and $\mathcal{C}: \partial \Omega \times \R \to \R$ are Carath\'eodory functions satisfying certain $p$-structure conditions that have critical growth even on the boundary. Based on a modified version of the Moser iteration we are able to prove that every weak solution of \eqref{problem0} is bounded up to the boundary. Under some additional assumptions on the functions $\mathcal{A}$ and $\mathcal{C}$ this leads directly to regularity for weak solutions of \eqref{problem0}. 
\end{abstract}

\maketitle

\section{Introduction}

Let $\Omega \subset \R^N$, $N>1$, be a bounded domain with a Lipschitz boundary $\partial \Omega$. In this paper, we study the boundedness of weak solutions of the problem
\begin{align}\label{problem}
    \begin{aligned}
        -\divergenz \mathcal{A} (x,u,\nabla u) & = \mathcal{B}(x,u,\nabla u)  & \hspace*{0.5cm} & \text{ in } \Om, \\
    \mathcal{A} (x,u,\nabla u)\cdot \nu & = \mathcal{C}(x,u)  && \text{ on } \partial \Omega,
    \end{aligned}
\end{align}
where $\nu(x)$ denotes the outer unit normal of $\Omega$ at $x\in \partial \Omega$, and $\mathcal{A}, \mathcal{B}$ and $\mathcal{C}$ satisfy
suitable $p$-structure conditions, see hypotheses (H) in Section \ref{section_3}.

The main goal of this paper is to present a priori bounds for weak solutions of equation \eqref{problem}, where we allow critical growth on the data in the domain and on the boundary. The main idea in the proof is based on a modified version of Moser's iteration which in turn is based on the books of Dr{\'a}bek-Kufner-Nicolosi \cite{Drabek-Kufner-Nicolosi-1997} and Struwe \cite{Struwe-2008}. 

In some sense, \eqref{problem} is a generalization of the classical differential equation from the Yamabe problem
\begin{equation}
\label{yamabe}
-\Delta u= f(x) u+ h(x) u^{\frac{N+2}{N-2}},
\end{equation}
where $ f $ and $ h $ are smooth functions. It is well known that there is no stable regularity theory for solutions of equation \eqref{yamabe}, which reflects the difficulty of the Yamabe problem. Nevertheless, it was proven by Trudinger \cite{Trudinger-1968} that any solution $ W^{1,2}(\Omega) $ of \eqref{yamabe} is in fact smooth, but the regularity estimates depend on the solution itself. In this spirit, our main result, Theorem \ref{main_theorem}, can thus be seen as a generalization of Trudinger's work. 

The main novelty of our paper consists in the generality of the assumptions
needed to establish the boundedness of weak solutions to \eqref{problem}. In particular, the
assumptions on the nonlinearity $\mathcal{C}$ are rather general allowing critical growth on the boundary. To the best of our knowledge, such a treatment with critical growth even on the boundary has not been published before. Another novelty is a result of independent interest which shows that a Sobolev function, which is bounded in the domain, is also bounded on the boundary, see Proposition \ref{proposition_boundedness_boundary}.

Recently, Papageorgiou-R\u adulescu \cite[Proposition 2.8]{Papageorgiou-Radulescu-2016} studied a priori bounds for problems of the form
\begin{align}\label{problem11}
    \begin{aligned}
        -\divergenz a (\nabla u) & = f_0(x,u)  & \hspace*{0.5cm} & \text{ in } \Om, \\
    a (\nabla u)\cdot \nu & = -\beta |u|^{p-2}u  && \text{ on } \partial \Omega,
    \end{aligned}
\end{align}
where $1<p<\infty$, the function $a:\R^N\to\R^N$ is continuous and strictly monotone satisfying certain regularity and growth conditions, the Carath\'eodory function $f_0:\Omega\times \R\to\R$ has critical growth with respect to the second variable and $\beta \in C^{1,\alpha}(\partial \Omega)$ with $\alpha \in (0,1)$ and $\beta\geq 0$. Note that our setting is more general than those in \cite{Papageorgiou-Radulescu-2016} since we have weaker conditions on $a$ and $f_0$ and our boundary term is able to have critical growth. The proof of their result is mainly based on a treatment of Garc\'\i a Azorero-Peral Alonso \cite{Garcia-Azorero-Peral-Alonso-1994}, who studied equation \eqref{problem11} with the $p$-Laplacian and homogeneous Dirichlet condition, namely
\begin{align*}
    \begin{aligned}
        -\Delta_p u & = \lambda |u|^{q-2}u+|u|^{p^*-2}u  & \hspace*{0.5cm} & \text{ in } \Om, \\
    u & = 0  && \text{ on } \partial \Omega,
    \end{aligned}
\end{align*}
with $1<q<p<N$, $\lambda>0$ and the Sobolev critical exponent $p^*$, see Section \ref{section_2} for its definition. Both works use a different technique than the Moser iteration applied in our paper. For the semilinear case we mention the work of Wang \cite{Wang-1991}.

An alternative approach was published by Guedda-V\'eron \cite{Guedda-Veron-1989} who studied quasilinear problems for positive solutions given by
\begin{align*}
    \begin{aligned}
        -\Delta_p u & = a(x) u^{p-1}+u^{p^*-1}& \hspace*{0.5cm} & \text{ in } \Om, \\
        u& > 0& \hspace*{0.5cm} & \text{ in } \Om,\\
    u & = 0  && \text{ on } \partial \Omega,
    \end{aligned}
\end{align*}
with $a\in \Linf$. In all these works the assumptions on the functions are stronger than ours and no critical growth on the boundary is allowed.

Finally, we mention some works concerning boundedness and regularity results of weak solutions to quasilinear equations of the form \eqref{problem} that have subcritical growth, see, for example, Fan-Zhao \cite{Fan-Zhao-1999}, Gasi{\'n}ski-Papageorgiou \cite{Gasinski-Papageorgiou-2011}, \cite[pp.\,737--738]{Gasinski-Papageorgiou-2006}, Hu-Papageorgiou \cite{Hu-Papageorgiou-2011}, L{\^e} \cite{Le-2006}, Motreanu-Motreanu-Papageorgiou \cite{Motreanu-Motreanu-Papageorgiou-2009}, Pucci-Servadei \cite{Pucci-Servadei-2008}, Winkert \cite{Winkert-2010}, \cite{Winkert-2010c}, \cite{Winkert-2014}, Winkert-Zacher \cite{Winkert-Zacher-2012}, \cite{Winkert-Zacher-2015}, \cite{Winkert-Zacher-2016} and the references therein. The methods used in these papers are mainly based on Moser's iteration or De Giorgi's iteration technique and no critical growth occurs.

The paper is organized as follows. In Section \ref{section_2} we present the main preliminaries including a multiplicative inequality estimating the boundary integrals and a result how $\Linf$-boundedness implies $L^\infty(\partial \Omega)$-boundedness. In Section \ref{section_3} we state our main result and the proof is divided into several parts. First we prove that every weak solution belongs to $L^q(\Omega)$ for any $q<\infty$, then we show its belonging to $L^q(\partial \Omega)$ for any finite $q$. In the second part of the proof we consider the uniform boundedness and show that a weak solution belongs to $\Linf$ and $L^\infty(\partial\Omega)$, respectively. Finally, as an important application, we give general conditions on the functions $\mathcal{A}$ and $\mathcal{C}$ when a solution lies in $C^{1,\beta}(\close)$ for some $\beta \in (0,1)$ based on the regularity results of Lieberman \cite{Lieberman-1991}.

\section{Preliminaries}\label{section_2}

Let $r$ be a number such that $1\leq r<\infty$. We denote by $\Lp{r}$, $\Lpvalued{r}$ and $\Wp{r}$ the usual Lebesgue and Sobolev spaces equipped with the norms $\|\cdot\|_r$ and $\|\cdot\|_{1,r}$ given by
\begin{align*}
    \|u\|_r&=\left(\into |u|^rdx\right)^{\frac{1}{r}},\qquad \|\nabla u\|_{r}=\left(\into |\nabla u|^r dx\right)^{\frac{1}{r}}\\
    \|u\|_{1,r}&=\left(\into |\nabla u|^r dx+\into |u|^rdx\right)^{\frac{1}{r}}.
\end{align*}
For $r=\infty$ we recall that the norm of $\Linf$ is given by
\begin{align*}
    \|u\|_\infty=\esssup_\Omega |u|.
\end{align*}
On the boundary $\partial \Omega$, we use the $(N-1)$-dimensional Hausdorff (surface) measure denoted by $\sigma$. Then, in a natural way we can define the Lebesgue spaces $L^s(\partial \Omega)$ with $1\leq s \leq \infty$ and the norms $\|\cdot\|_{s,\partial \Omega}$ which are given by
\begin{align*}
    \|u\|_{s,\partial \Omega}=\left(\int_{\partial\Omega} |u|^sd\sigma\right)^{\frac{1}{s}}\quad (1\leq s<\infty),\qquad \|u\|_{\infty,\partial\Omega}=\esssup_{\partial\Omega}|u|.
\end{align*}
It is well known that there exists a unique linear continuous map $\gamma: \Wp{p} \to \Lprand{p_*}$ known
as the trace map such that $\gamma(u)=u\big|_{\partial \Omega}$ for all $u\in\Wp{p}\cap C(\close)$, where $p_*$ is the critical exponent on the boundary given by
\begin{align}\label{critical_exponent_boundary}
    p_*=
    \begin{cases}
	\frac{(N-1)p}{N-p} &\text{if }p<N,\\
	\text{any }q\in(1,\infty)&\text{if }p\geq N.
    \end{cases}
\end{align}
For the sake of notational simplicity, we drop the use of the trace map $\gamma$. It is understood that all restrictions of the Sobolev functions $u\in \Wp{p}$ on
the boundary $\partial \Omega$ are defined in the sense of traces.

Furthermore, the Sobolev embedding theorem guarantees the existence of a li\-near, continuous map $i: \Wp{p}\to \Lp{p^*}$ with the critical exponent in the domain given by
\begin{align}\label{critical_exponent_domain}
    p^*=
    \begin{cases}
	\frac{Np}{N-p} &\text{if }p<N,\\
	\text{any }q\in(1,\infty)&\text{if }p\geq N.
    \end{cases}
\end{align}
We refer to Adams \cite{Adams-1975} as a reference for the embeddings above. 

The norm of $\R^N$ is denoted by $|\cdot|$ and $\cdot$ stands for the inner product in $\R^N$. For $s \in \R$, we set $s^{\pm}=\max\{\pm s,0\}$ and for $u \in W^{1,p}(\Omega)$ we define $u^{\pm}(\cdot)=u(\cdot)^{\pm}$. It is well known that
\begin{align*}
    u^{\pm} \in W^{1,p}(\Omega), \quad |u|=u^++u^-, \quad u=u^+-u^-.
\end{align*}
By $|\cdot|$ we denote the Lebesgue measure on $\R^N$. 

The following proposition will be useful in our treatment and was proven in Winkert \cite[Proposition 2.1]{Winkert-2014}

\begin{proposition}\label{proposition_boundary_integral}
    Let $\Omega \subset \R^N$, $N>1,$ be a bounded domain with Lipschitz boundary $\rand$, let $1<p<\infty$, and let $\hat{q}$ be such that $p \leq \hat{q}<p_*$ with the critical exponent stated in \eqref{critical_exponent_boundary}.
    Then, for every $\eps>0$, there exist constants $\tilde{c}_1>0$ and $\tilde{c}_2>0$ such that
    \begin{align*}
	\|u\|_{\hat{q},\rand}^p \leq \eps \|u\|_{1,p}^p+\tilde{c}_1\eps^{-\tilde{c}_2} \|u\|_{p}^p \qquad \text{for all }u \in W^{1,p}(\Omega).
    \end{align*}
\end{proposition}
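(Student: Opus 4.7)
The plan is to interpolate $\|u\|_{\hat q,\partial\Omega}$ between the endpoint norms $\|u\|_{p,\partial\Omega}$ and $\|u\|_{p_*,\partial\Omega}$, control the upper endpoint by the Sobolev trace embedding, rewrite the lower endpoint (via the divergence theorem) as a splitting between $\|u\|_{1,p}^p$ with a small coefficient and $\|u\|_p^p$ with a large coefficient, and finally combine everything through Young's inequality with carefully calibrated parameters. Since $p\le \hat q<p_*$, I first pick $\theta\in(0,1]$ with $1/\hat q=\theta/p+(1-\theta)/p_*$; the standard $L^r$-interpolation on $(\partial\Omega,\sigma)$ then yields
\begin{equation*}
\|u\|_{\hat q,\partial\Omega}^p \le \|u\|_{p,\partial\Omega}^{\theta p}\,\|u\|_{p_*,\partial\Omega}^{(1-\theta)p},
\end{equation*}
and the continuous trace embedding $W^{1,p}(\Omega)\hookrightarrow L^{p_*}(\partial\Omega)$ gives $\|u\|_{p_*,\partial\Omega}\le C_1\|u\|_{1,p}$.

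The key quantitative ingredient I would establish next is that, for every $\eta>0$,
\begin{equation*}
\|u\|_{p,\partial\Omega}^p \le \eta\,\|u\|_{1,p}^p + c\,\eta^{-1/(p-1)}\|u\|_p^p
\end{equation*}
with $c>0$ independent of $u$ and $\eta$. Since $\partial\Omega$ is Lipschitz, one can construct (via a partition of unity in boundary charts) a Lipschitz vector field $v:\overline\Omega\to\R^N$ with $v\cdot\nu\ge 1$ almost everywhere on $\partial\Omega$. Applying the divergence theorem to $|u|^p v$ (first for $u\in C^1(\overline\Omega)$, then extending by density to $W^{1,p}(\Omega)$) produces
\begin{equation*}
\intor |u|^p\,d\sigma \le p\into |u|^{p-1}|\nabla u|\,|v|\,dx + \into |u|^p\,|\divergenz v|\,dx,
\end{equation*}
and the pointwise Young's inequality $|u|^{p-1}|\nabla u|\le \eta'|\nabla u|^p + c'(\eta')^{-1/(p-1)}|u|^p$ absorbs the gradient contribution to yield the claim.

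To conclude, I substitute $\|u\|_{p_*,\partial\Omega}\le C_1\|u\|_{1,p}$ into the interpolation inequality and, when $\theta<1$, apply Young's inequality with conjugate exponents $1/(1-\theta)$ and $1/\theta$:
\begin{equation*}
\|u\|_{p,\partial\Omega}^{\theta p}\,\|u\|_{1,p}^{(1-\theta)p} \le \lambda\,\|u\|_{1,p}^p + c_2\,\lambda^{-\theta/(1-\theta)}\,\|u\|_{p,\partial\Omega}^p.
\end{equation*}
Plugging in the quantitative boundary estimate for $\|u\|_{p,\partial\Omega}^p$ gives an inequality of the form $A(\lambda,\eta)\,\|u\|_{1,p}^p + B(\lambda,\eta)\,\|u\|_p^p$; picking $\lambda$ and $\eta$ as suitable positive powers of $\varepsilon$ so that $A(\lambda,\eta)=\varepsilon$ forces $B(\lambda,\eta)=\tilde c_1\,\varepsilon^{-\tilde c_2}$ with explicit positive exponents. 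The case $\theta=1$ (i.e.\ $\hat q=p$) is handled directly by the second step. The main obstacle is securing the \emph{polynomial} $\varepsilon$-dependence: an abstract Ehrling/compactness argument exploiting $W^{1,p}(\Omega)\hookrightarrow L^p(\partial\Omega)$ would give the inequality but leave the constant opaque, so the divergence-theorem route is essential to make the dependence on $\varepsilon^{-1}$ explicit.
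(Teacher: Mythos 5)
Your argument is sound, and it supplies a self-contained proof of a statement that the paper itself does not reprove but merely cites (Proposition 2.1 of \cite{Winkert-2014}). The decomposition you use --- $L^r$-interpolation on $\partial\Omega$ between the endpoints $L^p(\partial\Omega)$ and $L^{p_*}(\partial\Omega)$, the trace embedding for the upper endpoint, and the divergence-theorem identity for $|u|^p v$ with a Lipschitz vector field $v$ satisfying $v\cdot\nu\ge 1$ a.e.\ on $\partial\Omega$ for the lower endpoint --- is exactly the mechanism that makes the $\eps$-dependence polynomial rather than the opaque constant an Ehrling-type compactness argument would give, and you correctly flag that this is the crux. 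The passage from $C^1(\overline\Omega)$ to $W^{1,p}(\Omega)$ and the pointwise Young step are handled properly, and the case $\theta=1$ (i.e.\ $\hat q=p$) is correctly reduced to the divergence-theorem estimate alone. Two small remarks: (i) the exponent produced by the parametrized Young inequality applied with conjugate pair $\l(\tfrac{1}{1-\theta},\tfrac{1}{\theta}\r)$ is $\lambda^{-(1-\theta)/\theta}$, not $\lambda^{-\theta/(1-\theta)}$; this is immaterial since the proposition only asserts the existence of some $\tilde c_2>0$, but it is worth fixing. (ii) The existence of a globally Lipschitz field $v$ on $\overline\Omega$ with $v\cdot\nu\ge 1$ $\sigma$-a.e.\ on a Lipschitz boundary is a classical fact (Ne\v{c}as, Grisvard) but should be cited explicitly, since the outward normal is only $L^\infty(\partial\Omega)$ here and the construction genuinely needs the boundary charts and a partition of unity, as you indicate.
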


The next proposition is a standard argument in the application of the Moser iteration, see for example Dr{\'a}bek-Kufner-Nicolosi \cite{Drabek-Kufner-Nicolosi-1997}.

\begin{proposition}\label{proposition_final_interation}
    Let $\Omega \subset \R^N$, $N>1,$ be a bounded domain with Lipschitz boundary $\rand$. Let $u \in \Lp{p}$ with $u\geq 0$ and $1<p<\infty$ such that
    \begin{align}\label{estimate_final}
	\|u\|_{\alpha_n}\leq C
    \end{align}
    with a constant $C>0$ and a sequence $(\alpha_n)\subseteq \R_+$ with $\alpha_n\to \infty$ as $n\to \infty$. Then, $u \in \Linf$.
\end{proposition}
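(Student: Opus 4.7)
The plan is to show directly that $\esssup_\Omega u \le C$, which is equivalent to the claimed $L^\infty$ bound. I will argue by contradiction on the supremum: suppose $\esssup_\Omega u > C$; then some $M$ with $C < M < \esssup_\Omega u$ exists, and I derive a contradiction by taking $n \to \infty$ in the hypothesized bound $\|u\|_{\alpha_n} \le C$.

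More concretely, given such an $M$, I would set $A_M = \{x \in \Omega : u(x) > M\}$. Because $M < \esssup_\Omega u$, the set $A_M$ has strictly positive Lebesgue measure, and since $\Omega$ is bounded we also have $|A_M| \le |\Omega| < \infty$. Hence for each $n$,
\begin{align*}
\|u\|_{\alpha_n}^{\alpha_n} \;=\; \into u^{\alpha_n}\,dx \;\ge\; \int_{A_M} u^{\alpha_n}\,dx \;\ge\; M^{\alpha_n}\,|A_M|,
\end{align*}
so taking $\alpha_n$-th roots yields $\|u\|_{\alpha_n} \ge M\,|A_M|^{1/\alpha_n}$.

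Now I exploit that $0 < |A_M| < \infty$ to conclude $|A_M|^{1/\alpha_n} \to 1$ as $\alpha_n \to \infty$. Combined with the assumption \eqref{estimate_final}, passing to the limit gives $C \ge \liminf_{n \to \infty} \|u\|_{\alpha_n} \ge M$, contradicting $M > C$. Hence $\esssup_\Omega u \le C$, which is precisely $u \in \Linf$ (with $\|u\|_\infty \le C$ as a byproduct).

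The only step requiring any care is the passage $|A_M|^{1/\alpha_n} \to 1$, which fails without either positivity or finiteness of $|A_M|$; both are secured by choosing $M$ strictly between $C$ and $\esssup_\Omega u$ and by the boundedness of $\Omega$. Everything else is routine, and the assumption $u \ge 0$ is used implicitly so that $u^{\alpha_n}$ is unambiguously defined for arbitrary real $\alpha_n$ (otherwise one would work with $|u|$ throughout).
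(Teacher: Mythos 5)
Your proof is correct and takes essentially the same approach as the paper: both argue by contradiction, isolating a set of positive (and finite) measure on which $u$ exceeds a threshold strictly larger than $C$, deriving $\|u\|_{\alpha_n}\geq M\,|A_M|^{1/\alpha_n}$, and letting $\alpha_n\to\infty$ to contradict \eqref{estimate_final}. The only cosmetic difference is that the paper writes the threshold as $C+\eta$ and phrases the contradiction hypothesis as $u\notin\Linf$, whereas you phrase it as $\esssup_\Omega u>C$ and thereby make explicit that the argument actually yields the sharper bound $\|u\|_\infty\leq C$.
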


\begin{proof}
    Let us suppose that $u \not\in \Linf$. Then there exist a number $\eta>0$ and a set $A$ of positive measure in $\Omega$ such that $u(x) \geq C+\eta$ for $x\in A$. Then it follows
    \begin{align*}
	\|u\|_{\alpha_n} \geq \left(\int_A u^{\alpha_n}dx\right)^\frac{1}{\alpha_n} \geq \left(C+\eta\right)|A|^{\frac{1}{\alpha_n}}.
    \end{align*}
    Passing to the limit inferior in the inequality above gives
    \begin{align*}
	\liminf_{n\to\infty} \|u\|_{\alpha_n} \geq C+\eta,
    \end{align*}
    which is a contradiction to \eqref{estimate_final}. Hence, $u\in \Linf$.
\end{proof}

\begin{remark}\label{remark_final_iteration}
    It is clear that the statement in Proposition \ref{proposition_final_interation} remains true if we replace the domain $\Omega$ by its boundary $\partial\Omega$.
\end{remark}

Finally, we state a result that the boundedness of a Sobolev function in $\Wp{p}$ implies the boundedness on the boundary.

\begin{proposition}\label{proposition_boundedness_boundary}
    Let $\Omega \subset \R^N$, $N>1,$ be a bounded domain with Lipschitz boundary $\rand$ and let $1<p<\infty$. 
    If $u \in \Wp{p}\cap \Linf$, then $u \in L^\infty(\partial\Omega)$.
\end{proposition}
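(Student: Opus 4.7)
The plan is to carry out a Moser-type iteration on the boundary by combining Propositions \ref{proposition_boundary_integral} and \ref{proposition_final_interation}. For each $\alpha \geq 1$, I would apply Proposition \ref{proposition_boundary_integral} to the Sobolev function $v_\alpha := |u|^\alpha$ and then let $\alpha \to \infty$, using Remark \ref{remark_final_iteration}.

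First, since $u \in \Linf$, the map $s \mapsto |s|^\alpha$ is Lipschitz on the essential range of $u$, so a standard chain-rule argument yields $v_\alpha \in \Wp{p}$ with $\nabla v_\alpha = \alpha |u|^{\alpha-1}\sgn(u)\nabla u$ a.e.\ in $\Omega$, and therefore
\begin{align*}
    \|\nabla v_\alpha\|_p^p \leq \alpha^p \|u\|_\infty^{p(\alpha-1)}\|\nabla u\|_p^p \quad\text{and}\quad \|v_\alpha\|_p^p \leq \|u\|_\infty^{p\alpha}|\Omega|.
\end{align*}

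Next, applying Proposition \ref{proposition_boundary_integral} with $\hat q = p$ and $\eps = 1$ to $v_\alpha$ and inserting the previous bounds should produce an estimate of the form
\begin{align*}
    \|u\|_{\alpha p, \rand}^{\alpha p} = \|v_\alpha\|_{p,\rand}^p \leq C\,\alpha^p\,\|u\|_\infty^{p(\alpha-1)}\bigl(\|u\|_\infty^p + \|\nabla u\|_p^p\bigr),
\end{align*}
with a constant $C>0$ depending only on $|\Omega|$ and $\tilde c_1$. Taking the $(\alpha p)$-th root and using that $\alpha^{1/\alpha}\to 1$ and $K^{1/(\alpha p)}\to 1$ as $\alpha \to \infty$ for any constant $K>0$ will show that $(\|u\|_{\alpha p, \rand})_{\alpha \geq 1}$ is a bounded sequence. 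Setting $\alpha_n := np$, Remark \ref{remark_final_iteration} applied to $|u|$ then yields $u \in L^\infty(\rand)$.

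The main delicate point will be the $\alpha$-dependence in the gradient of $v_\alpha$: the factor $\alpha^p$ coming from the chain rule looks threatening, but after the $(\alpha p)$-th root it contributes only $\alpha^{1/\alpha}$, which tends to $1$. All remaining $\alpha$-dependent quantities appear either as powers of $\|u\|_\infty$ combining cleanly or as constants raised to vanishing exponents, so the iteration should close without further obstacle.
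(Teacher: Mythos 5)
Your proof is correct and follows essentially the same Moser-style strategy as the paper's: take powers $|u|^\alpha$, bound the boundary $L^p$-norm through a trace-type inequality together with the chain rule, extract the $(\alpha p)$-th root, and conclude via Proposition \ref{proposition_final_interation} and Remark \ref{remark_final_iteration}. The only cosmetic variation is that you invoke Proposition \ref{proposition_boundary_integral} with $\hat q = p$ and $\eps = 1$, whereas the paper applies the trace Sobolev embedding $\|v\|_{p_*,\rand} \le c_{\rand}\|v\|_{1,p}$ directly to $v = u^\kappa$; both routes close in exactly the same way.
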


\begin{proof}
    By the Sobolev embedding we have
    \begin{align*}
	\|v\|_{p_*,\partial\Omega} \leq c_{\partial\Omega} \|v\|_{1,p}\quad\text{for all }v\in \Wp{p}
    \end{align*}
    with the critical exponent $p_*$ as in \eqref{critical_exponent_boundary}. Let $\kappa>1$ and take $v=u^\kappa$ in the inequality above. Note that $v\in \Wp{p}$ since $u\in\Wp{p}\cap \Linf$. This gives
    \begin{align*}
	\|u\|_{\kappa p_*,\partial\Omega} 
	&\leq c_{\partial\Omega}^{\frac{1}{\kappa}} \left[\left(\into |\nabla u^\kappa|^pdx\right)^{\frac{1}{\kappa p}}+\left(\into |u^\kappa|^pdx\right)^{\frac{1}{\kappa p}}\right]\\
	& \leq c_{\partial\Omega}^{\frac{1}{\kappa}} \left[\kappa^{\frac{1}{\kappa}}\|u\|_\infty^{1-\frac{1}{\kappa}} \|\nabla u\|_p^{\frac{1}{\kappa}} + \|u\|_\infty |\Omega|^{\frac{1}{\kappa p}} \right].
    \end{align*}
    Letting $\kappa \to \infty$, by applying Proposition \ref{proposition_final_interation} and Remark \ref{remark_final_iteration}, we derive
    \begin{align*}
	\|u\|_{\infty,\partial\Omega} 
	& \leq 2\|u\|_\infty.
    \end{align*}
\end{proof}

\section{A priori bounds via Moser iteration}\label{section_3}

In this section we state and prove our main result. First, we give the structure conditions on the functions involved in problem \eqref{problem}.

\begin{enumerate}
    \item[(H)]
	The functions $\mathcal{A}: \Omega \times \R \times \R^N \to \R^N$, $\mathcal{B}: \Omega \times \R \times \R^N \to \R$, and $\mathcal{C}: \partial \Omega \times \R \to \R$ are Carath\'eodory functions satisfying the following structure conditions:
	\begin{align*}
	    \text{(H1) \quad } & |\mathcal{A}(x,s,\xi)| \leq a_1|\xi|^{p-1}+a_2|s|^{q_1\frac{p-1}{p}}+a_3, && \text{ for a.a.\,} x\in \Omega,\\
	    \text{(H2) \quad } & \mathcal{A}(x,s,\xi) \cdot \xi \geq a_4|\xi|^{p}-a_5|s|^{q_1}-a_6, && \text{ for a.a.\,} x\in \Omega,\\
	    \text{(H3) \quad } & |\mathcal{B}(x,s,\xi)| \leq b_1|\xi|^{p\frac{q_1-1}{q_1}}+b_2|s|^{q_1-1}+b_3, &&\text{ for a.a.\,} x\in \Omega,\\
	    \text{(H4) \quad } & |\mathcal{C}(x,s)| \leq c_1|s|^{q_2-1}+c_2, && \text{ for a.a.\,} x\in \partial \Omega,
	\end{align*}
	for all $s \in \R$, for all $\xi \in \R^N$, with positive constants $a_i, b_j, c_k$ $\left(i\in \{1,\ldots,6 \},\right.$ $\left.j \in \{1,2,3\}, k\in\{1,2\}\right)$ and fixed numbers $p, q_1, q_2 $ such that
	\begin{align*}
	  1<p<\infty, \qquad p\leq q_1 \leq p^*, \qquad p\leq  q_2 \leq p_*
	\end{align*}
	with the critical exponents stated in \eqref{critical_exponent_domain} and \eqref{critical_exponent_boundary}.
\end{enumerate}

A function $u \in \Wp{p}$ is said to be a weak solution of equation \eqref{problem} if
\begin{align}\label{weak_solution}
    \into \mathcal{A}(x,u,\nabla u) \cdot \nabla \ph dx = \into \mathcal{B}(x,u,\nabla u) \ph dx + \int_{\partial \Omega} \mathcal{C}(x,u) \ph d \sigma
\end{align}
holds for all test functions $\ph \in \Wp{p}$.

By means of the embeddings $i:\Wp{p}\to \Lp{p^*}$ and $\gamma:\Wp{p}\to \Lprand{p_*}$ we see that the definition of a weak solution is well-defined and all integrals in \eqref{weak_solution} are finite for $u,\ph\in\Wp{p}$.

Now we can formulate the main result of our paper.

\begin{theorem}\label{main_theorem}
    Let $\Omega \subset \R^N$, $N>1,$ be a bounded domain with Lipschitz boundary $\rand$ and let the hypotheses (H) be satisfied. Then, every weak solution $u\in\Wp{p}$ of problem \eqref{problem} belongs to $ L^r(\close) $ for every $ r< \infty $. Moreover, 
    $ u \in L^\infty(\close)$, that is, $\|u\|_{\infty}\leq M$, where $M$ is a constant which depends on the given data and on $u $. 
    

\end{theorem}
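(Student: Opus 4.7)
The plan is to implement a Moser iteration with truncated test functions in two stages: first, establish that $u\in L^r(\Omega)\cap L^r(\partial\Omega)$ for every $r<\infty$, and then run a second, more refined iteration to derive the uniform bound. Since $u=u^+-u^-$ and the structure conditions (H1)--(H4) are symmetric under $u\mapsto -u$, it suffices to argue for $u^+$; I will write $u\geq 0$ for brevity. For each $M>0$ and $\kappa\geq 1$ I would insert the truncated test function $\varphi=u\,u_M^{\kappa p}$, with $u_M=\min\{u,M\}$, into the weak formulation \eqref{weak_solution}. Using (H2) on the left and (H1), (H3), (H4) together with Young's inequality on the right, then invoking the Sobolev embedding $W^{1,p}(\Omega)\hookrightarrow L^{p^*}(\Omega)$ applied to $u\,u_M^\kappa$, yields a schematic inequality of the form
\begin{equation*}
\|u\,u_M^\kappa\|_{p^*}^p \leq C(\kappa)\left(\into |u|^{q_1} u_M^{\kappa p}\,dx + \intor |u|^{q_2} u_M^{\kappa p}\,d\sigma + \text{lower order terms}\right).
\end{equation*}

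The genuine difficulty is the critical case $q_1=p^*$ and/or $q_2=p_*$. For the interior critical term I would split $\Omega=\{u\leq K\}\cup\{u>K\}$: on the first set the integrand is dominated by $K^{p^*-p}\,u^p u_M^{\kappa p}$, while on the second, H\"older's inequality produces a factor $\bigl(\int_{\{u>K\}}|u|^{p^*}dx\bigr)^{(p^*-p)/p^*}$ which, by $u\in L^{p^*}(\Omega)$ and absolute continuity of the integral, becomes arbitrarily small as $K\to\infty$, allowing absorption of the critical term into $\|u\,u_M^\kappa\|_{p^*}^p$ on the left. The boundary critical term is treated analogously via a splitting on $\partial\Omega$ and the trace embedding $W^{1,p}(\Omega)\hookrightarrow L^{p_*}(\partial\Omega)$; the resulting subcritical boundary remainders are absorbed using Proposition \ref{proposition_boundary_integral} with a sufficiently small $\varepsilon$. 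After coordinating $K$ and $\varepsilon$ so that every critical contribution can be absorbed, passing $M\to\infty$ by monotone convergence yields $u\in L^{\kappa p^*}(\Omega)\cap L^{\kappa p_*}(\partial\Omega)$, and iterating in $\kappa$ gives $u\in L^r(\overline{\Omega})$ for every $r<\infty$.

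For the uniform bound I would run the Moser iteration a second time. With $u$ now in every $L^q$, the terms coming from $q_1$ and $q_2$ are effectively subcritical and no absorption trick is needed; tracking the constants carefully produces a sequence $\alpha_n\to\infty$ with $\|u\|_{\alpha_n}\leq M_0$ uniform in $n$. Proposition \ref{proposition_final_interation} then gives $u\in\Linf$, and Proposition \ref{proposition_boundedness_boundary} promotes this to $u\in L^\infty(\partial\Omega)$, so $u\in L^\infty(\overline{\Omega})$ as claimed. The main technical obstacle I anticipate is the first stage: the interior and boundary critical terms must be absorbed \emph{simultaneously}, which requires a delicate coordination of the truncation level $M$, the splitting level $K$, and the absorption parameter $\varepsilon$ in Proposition \ref{proposition_boundary_integral}, while keeping the dependence of the constants on $\kappa$ compatible with the subsequent iteration step.
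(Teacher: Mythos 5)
Your proposal mirrors the paper's proof almost exactly: the same truncated test function $\varphi = u\,u_h^{\kappa p}$, the same level-set splitting of the critical interior and boundary integrals (your split on $\{u\le K\}$ vs.\ $\{u>K\}$ is equivalent to the paper's split on $u^{p^*-p}\le L$), the same absorption via absolute continuity of the integral, the same use of Propositions \ref{proposition_boundary_integral}, \ref{proposition_final_interation}, and \ref{proposition_boundedness_boundary}, and the same two-stage structure (first $L^r$ for all finite $r$, then a second iteration with $\kappa$-tracked constants for $L^\infty$). The only cosmetic deviations are immaterial — you invoke (H1) on the right-hand side where it is in fact unneeded (the paper notes this in a remark), and you cite monotone convergence where the paper uses Fatou — so the argument is correct and not genuinely different.
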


\begin{proof}
    Let $u\in \Wp{p}$ be a weak solution of problem \eqref{problem}. Since $u=u^+ -u^-$ we can suppose, without any loss of generality, that $u \geq 0$. Furthermore, we only prove the case when $q_1=p^*$ and $q_2=p_*$. The other cases were already obtained in \cite[Theorem 4.1]{Winkert-2010c} and \cite[Theorem 3.1]{Winkert-2014}. Moreover, we will denote positive constants with $M_i$ and if the constant depends on the parameter $\kappa$ we write $M_i(\kappa)$ for $i=1,2,\ldots$.
    
    Let $h>0$ and set $u_h=\min\{u,h\}$. Then we choose $\ph=uu_h^{\kappa p}$ with $\kappa>0$ as test function in \eqref{weak_solution}. Note that $\nabla \ph=\nabla u u_h^{\kappa p}+u\kappa p u_h^{\kappa p-1}\nabla u_h$. This gives
    \begin{align}\label{1}
      \begin{split}
	&\into \mathcal{A}(x,u,\nabla u)\cdot \nabla u u_h^{\kappa p}dx+ \kappa p\int_\Omega \mathcal{A}(x,u,\nabla u) \cdot \nabla u_h u_h^{\kappa p-1}udx\\ 
	&= \into \mathcal{B}(x,u,\nabla u) uu_h^{\kappa p} dx + \int_{\partial \Omega} \mathcal{C}(x,u) uu_h^{\kappa p} d \sigma.
      \end{split}
    \end{align}
    Applying (H2) to the first term of the left-hand side of \eqref{1} yields
    \begin{align}\label{2}
      \begin{split}
	&\into \mathcal{A}(x,u,\nabla u) \cdot \nabla u u_h^{\kappa p}dx\\
	&\geq \int_{\Omega} \left[a_4 |\nabla u|^p-a_5u^{p^*}-a_6\right]u_h^{\kappa p} dx\\
	& \geq a_4\into \left|\nabla u\right|^p u_h^{\kappa p}dx -(a_5+a_6) \into u^{p^*}u_h^{\kappa p}dx-a_6|\Omega|,
      \end{split}
    \end{align}
    respectively to the second term on the left-hand side
    \begin{align}\label{2b}
      \begin{split}
	&\kappa p\int_\Omega \mathcal{A}(x,u,\nabla u) \cdot \nabla u_h u_h^{\kappa p-1}udx\\
	&=\kappa p\int_{\left\{x\in\Omega: u(x)\leq h\right\}} \mathcal{A}(x,u,\nabla u) \cdot \nabla u u_h^{\kappa p}dx\\
	&\geq \kappa p \int_{\left\{x\in\Omega: u(x)\leq h\right\}} \left[a_4 |\nabla u|^p-a_5u^{p^*}-a_6\right]u_h^{\kappa p} dx\\
	& \geq a_4 \kappa p \int_{\left \{x \in \Omega: \, u(x) \le h\right \}} \left|\nabla u \right|^p u_h^{\kappa p}dx -\kappa p (a_5+a_6) \into u^{p^*}u_h^{\kappa p}dx-\kappa pa_6|\Omega|.
      \end{split}
    \end{align}
    By means of (H3) combined with Young's inequality with $\eps_1 >0$, the first term on the right-hand side of \eqref{1} can be estimated through
    \begin{align}\label{3}
	\begin{split}
	    & \into \mathcal{B}(x,u,\nabla u) uu_h^{\kappa p} dx\\
	    & \leq b_1 \into \eps_1^{\frac{p^*-1}{p^*}}|\nabla u|^{p\frac{p^*-1}{p^*}} u_h^{\kappa p \frac{p^*-1}{p^*}} \eps_1^{-\frac{p^*-1}{p^*}}u_h^{\kappa p \left(1-\frac{p^*-1}{p^*} \right)}udx\\
	    & \qquad +(b_2+b_3) \into u^{p^*}u_h^{\kappa p} dx + b_3|\Omega|\\
	    & \leq \eps_1 b_1 \into  |\nabla u|^{p}u_h^{\kappa p} dx +\left(b_1\eps_1^{-(p^*-1)}+b_2+b_3\right)
	    \into u^{p^*}u_h^{\kappa p} dx + b_3|\Omega|.
	\end{split}
    \end{align}
Finally, the boundary term can be estimated via (H4). This leads to
\begin{align}\label{4}
	\begin{split}
	    \int_{\partial \Omega} \mathcal{C}(x,u) uu_h^{\kappa p} d \sigma
	    & \leq \int_{\rand} \left(c_1u^{p_*-1}+c_2\right) uu_h^{\kappa p}d\sigma\\
	    & \leq (c_1+c_2) \int_{\partial\Omega} u^{p_*}u_h^{\kappa p}d\sigma +c_2|\partial \Omega|.
	\end{split}
    \end{align}
    We now combine \eqref{1}-\eqref{4} and choose $ \eps_1= \frac{a_4}{2 b_1} $ to obtain
\begin{equation}\label{5a}
    \begin{split}
	& a_4 \left(\frac{1}{2} \int_{\Omega} \vert \nabla u \vert^p u_h^{\kappa p} dx +  \kappa p \int_{\left\{x \in \Omega: \, u(x) \le h\right\}} \vert \nabla u \vert^p u_h^{\kappa p} dx \right) \\
	& \le \left((\kappa p+ 1)(a_5+ a_6)+ b_1 \eps_1^{-(p^*-1)}+ b_2+ b_3 \right) \int_{\Omega} u^{p^*} u_h^{\kappa p} dx \\
	& \qquad +(c_1+ c_2) \int_{\partial \Omega} u^{p_*} u_h^{\kappa p} d\sigma+ ((\kappa p+1)a_6+ b_3) \vert \Omega \vert+ c_2 \vert \rand \vert.
\end{split}
\end{equation}
Observe that 
\begin{equation*}\label{5b}
    \begin{split}
	& \frac{1}{2} \int_{\Omega}  \vert \nabla u \vert^p u_h^{\kappa p}  dx + \kappa p \int_{\{x \in \Omega: \, u(x) \le h\}} \vert \nabla u \vert^p u_h^{\kappa p} dx \\
	&= \frac{1}{2} \int_{\left\{x \in \Omega: \, u(x)> h\right\}} \vert \nabla u \vert^p u_h^{\kappa p} dx+ \left(\kappa p+ \frac{1}{2}\right) \int_{\left\{x \in \Omega: \, u(x) \le h\right\}} \vert \nabla u \vert^p u_h^{\kappa p} dx \\ 
	& \ge \frac{\kappa p+ 1}{2(\kappa+ 1)^p} \int_{\left\{x \in \Omega: \, u(x)> h\right\}} \vert \nabla u \vert^p u_h^{\kappa p} dx+ \frac{\kappa p+1}{2}  \int_{\left\{x \in \Omega: \, u(x) \le h\right\}} \vert \nabla u \vert^p u_h^{\kappa p} dx \\
	&\ge \frac{\kappa p+ 1}{2(\kappa+ 1)^p} \int_{\Omega} | \nabla \left(u u_h^{\kappa}\right) |^p dx
    \end{split}
\end{equation*}
invoking Bernoulli's inequality $ (\kappa+ 1)^p \ge \kappa p+ 1$. From \eqref{5a} it follows
    \begin{align}\label{5}
      \begin{split}
	&a_4\frac{\kappa p+ 1}{2 (\kappa+1)^p}\into \left|\nabla \left(uu_h^{\kappa}\right)\right|^pdx\\
	& \leq \left((\kappa p+1) (a_5+a_6)+b_1\eps_1^{-(p^*-1)}+b_2+b_3\right )\into u^{p^*}u_h^{\kappa p}dx\\
	&\qquad +(c_1+c_2) \int_{\partial\Omega} u^{p_*}u_h^{\kappa p}d\sigma+\left((\kappa p+ 1) a_6 +b_3 \right) |\Omega|+c_2|\partial \Omega|.
      \end{split}
    \end{align}
    Dividing by $a_4$, summarizing the constants and adding on both sides of \eqref{5} the nonnegative term $\frac{\kappa p+ 1}{(\kappa+1)^p}\|uu_h^{\kappa}\|_{p}^p$ gives
    \begin{align}\label{6}
      \begin{split}
	&\frac{\kappa p+ 1}{(\kappa+1)^p}\|uu_h^{\kappa}\|_{1,p}^p \\
	&\leq \frac{\kappa p+ 1}{(\kappa+1)^p}\|uu_h^{\kappa}\|_{p}^p +M_1(\kappa p+ 1)\into u^{p^*}u_h^{\kappa p}dx
	+M_2\int_{\partial\Omega} u^{p_*}u_h^{\kappa p}d\sigma +M_3\kappa.
      \end{split}
    \end{align}

    {\bf Part I: $u\in L^r(\close)$ for any finite $r$}
    
    Let us now estimate the terms on the right-hand side involving the critical exponents. We set $a:=u^{p^*-p}$ and $b:=u^{p_*-p}$. Moreover, let $L>0$ and $G>0$. Then, by using H\"older's inequality and the Sobolev embeddings for $p^*$ and $p_*$, see Section \ref{section_2}, we get
    \begin{align}\label{7}
      \begin{split}
	&\into u^{p^*}u_h^{\kappa p}dx\\
	&= \int_{\left\{x \in \Omega:\, a(x) \le L\right\}} a u^pu_h^{\kappa p} dx
	+ \int_{\left\{x \in \Omega: \, a(x)> L\right\}} a u^pu_h^{\kappa p} dx \\
	& \le L \int_{\left\{x \in \Omega: \, a(x) \le L\right\}} u^pu_h^{\kappa p} dx\\
	& \qquad+ \left(\int_{\left\{x \in \Omega: \, a(x)> L\right\}} a^{\frac{p^*}{p^*-p}} dx \right)^{\frac{p^*-p}{p^*}} \left(\int_{\Omega} u^{p^*}u_h^{\kappa p^*} dx \right)^{\frac{p}{p^*}} \\
	&\leq L \| uu_h^{\kappa} \|_{p}^p+ \left(\int_{\left\{x \in \Omega: \, a(x)> L\right\}} a^{\frac{p^*}{p^*-p}} dx \right)^{\frac{p^*-p}{p^*}} c_\Omega^p\| uu_h^{\kappa} \|_{1,p}^p
      \end{split}
    \end{align}
    and
    \begin{align}\label{8}
	\begin{split}
	    & \int_{\partial \Omega} u^{p_*}u_h^{\kappa p} d\sigma\\
	    &= \int_{\left\{x \in \partial \Omega: \, b(x) \le G\right\}} b u^pu_h^{\kappa p} d\sigma+ \int_{\left\{x \in \partial\Omega: \, b(x)> G\right\}} b u^pu_h^{\kappa p} d\sigma \\
	    & \le G \int_{\left\{x \in \partial\Omega: \, b(x) \le G\right\}} u^pu_h^{\kappa p} d\sigma\\
	    &\qquad + \left(\int_{\left\{x \in \partial\Omega: \, b(x)> G\right\}} b^{\frac{p_*}{p_*-p}} d\sigma \right)^{\frac{p_*-p}{p_*}} \left(\int_{\partial\Omega} u^{p_*}u_h^{\kappa p_*} d\sigma \right)^{\frac{p}{p_*}} \\
	    &\leq G \| u u_h^{\kappa} \|_{p,\partial\Omega}^p+ \left(\int_{\left\{x \in \partial\Omega: \, b(x)> G\right\}} b^{\frac{p_*}{p_*-p}} d\sigma \right)^{\frac{p_*-p}{p_*}} c_{\partial\Omega}^p\| uu_h^{\kappa} \|_{1,p}^p
	\end{split}
    \end{align}
    with the embedding constants $c_\Omega$ and $c_{\partial\Omega}$. Note that
    \begin{align}\label{9}
      \begin{split}
	H(L)& :=\left(\int_{\left\{x \in \Omega: \, a(x)> L\right\}} a^{\frac{p^*}{p^*-p}} dx \right)^{\frac{p^*-p}{p^*}} \to 0 \quad\text{as }L \to \infty,\\
	K(G)&:=\left(\int_{\left\{x \in \partial\Omega: \, b(x)> G\right\}} b^{\frac{p_*}{p_*-p}} d\sigma \right)^{\frac{p_*-p}{p_*}}\to 0 \quad\text{as }G \to \infty.
      \end{split}
    \end{align}
    Combining \eqref{6}, \eqref{7}, \eqref{8} and \eqref{9} finally yields
    \begin{align}\label{10}
      \begin{split}
	&\frac{\kappa p+ 1}{(\kappa+1)^p}\|uu_h^{\kappa}\|_{1,p}^p \\
	&\leq \left[\frac{\kappa p+1}{(\kappa+1)^p}+M_1(\kappa p+ 1) L\right]\|uu_h^{\kappa}\|_{p}^p +M_1(\kappa p+1) H(L) c_\Omega^p\| uu_h^{\kappa} \|_{1,p}^p\\
	& \qquad  +M_2G \| uu_h^{\kappa} \|_{p,\partial\Omega}^p+ M_2K(G) c_{\partial\Omega}^p\| uu_h^{\kappa} \|_{1,p}^p +M_3\kappa.
      \end{split}
    \end{align}
    
    Now we choose $L=L(\kappa, u)>0$ and $G=G(\kappa, u)>0$ such that
    \begin{align*}
      \begin{split}
	M_1(\kappa p+1) H(L) c_\Omega^p&=\frac{\kappa p+1}{4(\kappa+1)^p},\qquad
	M_2K(G) c_{\partial\Omega}^p=\frac{\kappa p+ 1}{4(\kappa+1)^p}.
      \end{split}
    \end{align*}
    Then, \eqref{10} becomes
    \begin{align}\label{12}
      \begin{split}
	&\frac{\kappa p+1}{2(\kappa+1)^p}\|u u_h^{\kappa}\|_{1,p}^p \\
	&\leq \left[\frac{\kappa p+1}{(\kappa+1)^p}+M_1(\kappa p+ 1) L(\kappa, u)\right]\|u u_h^{\kappa}\|_{p}^p +M_2G(\kappa, u) \| u u_h^{\kappa} \|_{p,\partial\Omega}^p +M_3\kappa,
      \end{split}
    \end{align}
    where $L(\kappa, u)$ and $G(\kappa, u)$ depend on $\kappa$ and on the solution $u$.
    
    {\bf Case I.1: $u \in \Lp{r}$ for any finite $r$}
    
    We can use Proposition \ref{proposition_boundary_integral} to estimate the remaining boundary term in form of
    \begin{align}\label{13}
      \begin{split}
	&\| uu_h^{\kappa} \|_{p,\partial\Omega}^p
	\leq \eps_2 \|uu_h^{\kappa}\|_{1,p}^p+\tilde{c}_1\eps_2^{-\tilde{c}_2} \|uu_h^{\kappa}\|_{p}^p.
      \end{split}
    \end{align}
    Choosing $\eps_2=\frac{1}{M_2 G(\kappa, u)}\frac{\kappa p+1}{4(\kappa+1)^p}$ and applying \eqref{13} to \eqref{12} gives
    \begin{align}\label{14}
      \begin{split}
	&\frac{\kappa p+1}{4(\kappa+1)^p}\|uu_h^{\kappa}\|_{1,p}^p \\
	&\leq \left[\frac{\kappa p+1}{(\kappa+1)^p}+M_1(\kappa p+1)L(\kappa, u)+M_2 G(\kappa, u)\tilde{c}_1\eps_2^{-\tilde{c}_2}\right]\|uu_h^{\kappa}\|_{p}^p +M_3\kappa.
      \end{split}
    \end{align}
    Inequality \eqref{14} can be rewritten as
    \begin{align}\label{15}
      \begin{split}
	&\|uu_h^{\kappa}\|_{1,p}^p \leq M_4(\kappa, u)\left[\|uu_h^{\kappa}\|_{p}^p +1\right]
      \end{split}
    \end{align}
    with a constant $M_4(\kappa, u)$ depending on $\kappa$ and on the function $u$. We may apply the Sobolev embedding theorem on the left-hand side of \eqref{15} which leads to
    \begin{align}\label{81}
	\begin{split}
	    &\|uu_h^\kappa\|_{p^*}\leq c_\Omega \|uu_h^{\kappa}\|_{1,p}
	    \leq M_5(\kappa, u) \left [\|uu_h^{\kappa}\|_{p}^p+1\right]^{\frac{1}{p}}.
	\end{split}
    \end{align}
    Now we can start with the typical bootstrap arguments. We choose $\kappa_1$ such that $(\kappa_1+1)p = p^*$. Then \eqref{81} becomes 
    \begin{align}\label{82}
	\begin{split}
	    \|uu_h^{\kappa_1}\|_{p^*} 
	    &\leq M_5(\kappa_1, u) \left [\|uu_h^{\kappa_1}\|_{p}^p+1\right]^{\frac{1}{p}}  \leq M_6(\kappa_1, u) \left [\|u^{\kappa_1+1}\|_{p}^p+1\right]^{\frac{1}{p}}\\
	    & = M_6(\kappa_1, u) \left [\|u\|_{p^*}^{p^*}+1\right]^{\frac{1}{p}}<\infty,
	\end{split}
    \end{align}
    since $u_h(x)=\min(u(x),h(x)) \leq u(x)$ for a.\,a.\,$x\in \Omega$. Now we may apply Fatou's Lemma as $h\to \infty$ in \eqref{82}. This gives
    \begin{align}\label{83}
	\begin{split}
	    \|u\|_{(\kappa_1+1)p^*} =\|u^{\kappa_1+1}\|_{p^*}^{\frac{1}{\kappa_1+1}}
	    &\leq M_7(\kappa_1, u) \left [\|u\|_{p^*}^{p^*}+1\right]^{\frac{1}{(\kappa_1+1)p}}<\infty.
	\end{split}
    \end{align}
    Hence, $u\in \Lp{(\kappa_1+1)p^*}$. Repeating the steps from \eqref{81}-\eqref{83} for each $\kappa$, we choose a sequence such that
    \begin{align*}
      \begin{split}
	& \kappa_2: (\kappa_2+1)p = (\kappa_1+1) p^*,\\
	& \kappa_3: (\kappa_3+1)p = (\kappa_2+1) p^*,\\
	& \qquad \vdots \qquad \qquad \qquad \qquad \vdots \quad \, .
      \end{split}
    \end{align*}
    This shows that
    \begin{align}\label{16}
        \|u \|_{(\kappa+1)p^*}\leq M_8(\kappa, u)
    \end{align}
    for any finite number $\kappa$, where $M_8(\kappa, u)$ is a positive constant depending both on $\kappa$ and on the solution $u$. Thus, $u \in L^r(\Omega)$ for any $ r \in (1,\infty)$. This proves Case I.1.
    
    {\bf Case I.2: $u \in \Lprand{r}$ for any finite $r$}
    
 Let us repeat inequality \eqref{12} which says
 \begin{align}\label{17}
      \begin{split}
	&\frac{\kappa p+1}{2(\kappa+1)^p}\|u u_h^{\kappa}\|_{1,p}^p \\
	&\leq \left[\frac{\kappa p+1}{(\kappa+1)^p}+M_9(\kappa p +1)L(\kappa, u)\right]\|u u_h^{\kappa}\|_{p}^p\\[1ex] 
	& \qquad +M_{10}G(\kappa, u) \| u u_h^{\kappa} \|_{p,\partial\Omega}^p +M_{11}\kappa.
      \end{split}
    \end{align}
Taking into account \eqref{16}, we can write \eqref{17} in the form
    \begin{align}\label{18}
      \begin{split}
	&\|u u_h^{\kappa}\|_{1,p}^p
	\leq M_{12}(\kappa, u)\left[\| u u_h^{\kappa}\|_{p,\partial\Omega}^p+1\right].
      \end{split}
    \end{align}
Now we may apply the Sobolev embedding theorem for the boundary on the left-hand side of \eqref{18}. This gives
\begin{align}\label{85}
	\begin{split}
	    &\|uu_h^\kappa\|_{p_*, \rand}\leq c_{\partial \Omega} \|uu_h^{\kappa}\|_{1,p}
	    \leq M_{13}(\kappa, u)\left[\| u u_h^{\kappa}\|_{p,\partial\Omega}^p+1\right]^{\frac{1}{p}}.
	\end{split}
\end{align}
As before we proceed with a bootstrap argument and choose $\kappa_1$ in \eqref{85} such that $(\kappa_1+1)p = p_*$. This yields
\begin{align}\label{86}
	\begin{split}
	    \|uu_h^{\kappa_1}\|_{p_*, \rand}
	    &\leq M_{13}(\kappa_1, u)\left[\| u u_h^{\kappa_1}\|_{p,\partial\Omega}^p+1\right]^{\frac{1}{p}}\leq M_{14}(\kappa_1, u)\left[\| u^{\kappa_1+1}\|_{p,\partial\Omega}^p+1\right]^{\frac{1}{p}}\\
	    &\leq M_{14}(\kappa_1, u)\left[\| u\|_{p_*,\partial\Omega}^{p_*}+1\right]^{\frac{1}{p}}<\infty.
	\end{split}
    \end{align}
Applying again Fatou's Lemma we obtain from \eqref{86}
\begin{align}\label{87}
	\begin{split}
	    \|u\|_{(\kappa_1+1)p_*,\partial\Omega} =\|u^{\kappa_1+1}\|_{p_*,\partial\Omega}^{\frac{1}{\kappa_1+1}}
	    &\leq M_{15}(\kappa_1, u) \left [\|u\|_{p_*,\partial\Omega}^{p_*}+1\right]^{\frac{1}{(\kappa_1+1)p}}<\infty.
	\end{split}
    \end{align}
Therefore, $u\in \Lprand{(\kappa_1+1)p_*}$. For each $\kappa$ we repeat the steps from \eqref{85}--\eqref{87} and choose a sequence such that
    \begin{align*}
      \begin{split}
	& \kappa_2: (\kappa_2+1)p = (\kappa_1+1) p_*,\\
	& \kappa_3: (\kappa_3+1)p = (\kappa_2+1) p_*,\\
	& \qquad \vdots \qquad \qquad \qquad \qquad \vdots \quad \, .
      \end{split}
    \end{align*}
    We obtain
    \begin{align}\label{19}
        \|u \|_{(\kappa+1)p_*,\partial\Omega}\leq M_{16}(\kappa, u)
    \end{align}
    for any finite number $\kappa$, where $M_{16}(\kappa, u)$ is a positive constant depending on $\kappa$ and on the solution $u $. Thus, $u \in L^r(\partial \Omega)$ for any $ r \in (1,\infty)$, and therefore $u \in L^r(\close)$ for any finite $r\in (1,\infty)$. This completes the proof of Part I.
    
    {\bf Part II: $u\in L^\infty(\close)$}
    
   Let us recall inequality \eqref{6} which says
    \begin{align}\label{20}
      \begin{split}
	&\frac{\kappa p+1}{(\kappa+1)^p}\|uu_h^{\kappa}\|_{1,p}^p \\
	&\leq \frac{\kappa p+1}{(\kappa+1)^p}\|uu_h^{\kappa}\|_{p}^p +M_{17}(\kappa p+1)\into u^{p^*}u_h^{\kappa p}dx\\[1ex]
	& \qquad +M_{18}\int_{\partial\Omega} u^{p_*}u_h^{\kappa p}d\sigma +M_{19}\kappa.
      \end{split}
    \end{align}
    Let us fix numbers $\tilde{q}_1\in (p,p^*)$ and $\tilde{q}_2\in (p,p_*)$. Then, by applying H\"older's inequality and the results of Part I, see \eqref{16} and \eqref{19}, we derive for the several terms on the right-hand side of \eqref{20}
    \begin{align}\label{21}
      \begin{split}
	\|uu_h^{\kappa}\|_{p}^p
	&\leq |\Omega|^{\frac{\tilde{q}_1-p}{\tilde{q}_1}} \left(\into (uu_h^{\kappa})^{\tilde{q}_1}dx\right)^{\frac{p}{\tilde{q}_1}}\leq M_{20}\|uu_h^{\kappa}\|_{\tilde{q}_1}^p,\\
	\into u^{p^*}u_h^{\kappa p}dx
	&=\into u^{p^*-p}(uu_h^{\kappa})^pdx\\ 
	& \leq \left(\into u^{\frac{p^*-p}{\tilde{q}_1-p}\tilde{q}_1}dx\right)^{\frac{\tilde{q}_1-p}{\tilde{q}_1}}\left(\into (uu_h^{\kappa})^{\tilde{q}_1}\right)^{\frac{p}{\tilde{q}_1}} \leq M_{21}\|uu_h^{\kappa}\|_{\tilde{q}_1}^p,\\
	\int_{\partial\Omega} u^{p_*}u_h^{\kappa p}d\sigma
	&=\int_{\partial\Omega} u^{p_*-p}(uu_h^{\kappa})^pd\sigma\\
	&\leq \left(\int_{\partial\Omega} 
	u^{\frac{p_*-p}{\tilde{q}_2-p}\tilde{q}_2}d\sigma\right)
	^{\frac{\tilde{q}_2-p}{\tilde{q}_2}}\left(\int_{\partial\Omega} (uu_h^{\kappa})^{\tilde{q}_2}d\sigma\right)^{\frac{p}{\tilde{q}_2}}\\
	& \leq M_{22}\|uu_h^{\kappa}\|_{\tilde{q}_2,\partial\Omega}^p.
      \end{split}
    \end{align}
    Note that $M_{21}, M_{22}$ are finite because of Part I. Moreover, we see from the calculations above that
    \begin{align}\label{constants}
	M_{21}=M_{21}\left(\|u\|_{\frac{p^*-p}{\tilde{q}_1-p}\tilde{q}_1}\right) \quad\text{and}\quad M_{22}=M_{22}\left(\|u\|_{\frac{p_*-p}{\tilde{q}_2-p}\tilde{q}_2}\right).
    \end{align}
    Using \eqref{21} to \eqref{20} leads to
    \begin{align}\label{22}
    \begin{split}
	&\frac{\kappa p+1}{(\kappa+1)^p}\|uu_h^{\kappa}\|_{1,p}^p \leq M_{23}\frac{\kappa p+1}{(\kappa+1)^p}\|uu_h^{\kappa}\|_{\tilde{q}_1}^p+M_{24}\|uu_h^{\kappa}\|_{\tilde{q}_2,\partial\Omega}^p +M_{25}\kappa.
      \end{split}
    \end{align}
    
    {\bf Case II.1: $u\in \Linf$}
    
    As before, we can estimate the boundary term via Proposition \ref{proposition_boundary_integral} and then use H\"older's inequality as seen in the first line of \eqref{21}. This gives
    \begin{align}\label{23}
      \begin{split}
	\|uu_h^{\kappa}\|_{\tilde{q}_2,\partial\Omega}^p
	&\leq \eps_3 \|uu_h^{\kappa}\|_{1,p}^p+\tilde{c}_1\eps_3^{-\tilde{c}_2} \|uu_h^{\kappa}\|_{p}^p\\
	& \leq \eps_3 \|uu_h^{\kappa}\|_{1,p}^p+\tilde{c}_1\eps_3^{-\tilde{c}_2} M_{20}\|uu_h^{\kappa}\|_{\tilde{q}_1}^p.
      \end{split}
    \end{align}
    Now we choose $\eps_3=\frac{\kappa p+1}{2M_{24}(\kappa+1)^p}$ and apply \eqref{23} in \eqref{22} to obtain
    \begin{align}\label{24}
      \begin{split}
	&\frac{\kappa p+1}{2(\kappa+1)^p}\|uu_h^{\kappa}\|_{1,p}^p 
	\leq \left(M_{23}(\kappa p+1)+\tilde{c}_1\eps_3^{-\tilde{c}_2} M_{20}M_{24}\right)\|uu_h^{\kappa}\|_{\tilde{q}_1}^p+M_{25}\kappa.
      \end{split}
    \end{align}
    Inequality \eqref{24} can be rewritten in the form
    \begin{align}\label{25}
      \begin{split}
	&\|uu_h^{\kappa}\|_{1,p}^p 
	\leq M_{26}\left((\kappa+1)^p\right)^{M_{27}} \left[\|uu_h^{\kappa}\|_{\tilde{q}_1}^p+1\right].
      \end{split}
    \end{align}
    In order so see this, note that
    \begin{align*}
	&\frac{2(\kappa+1)^p}{\kappa p+1}
	\left(M_{23}(\kappa p+1)+\tilde{c}_1\eps_3^{-\tilde{c}_2} M_{20}M_{24}\right)\\
	&= 2(\kappa+1)^p
	\left(M_{23}+\tilde{c}_1\left(\frac{2M_{24}(\kappa+1)^p}{\kappa p+1}\right)^{\tilde{c}_2} \frac{1}{\kappa p+1}
	M_{20}M_{24}\right)\\
	&\leq M_{26}\left((\kappa+1)^p\right)^{M_{27}}.
    \end{align*}
    Now we may apply the Sobolev embedding on the left-hand side of \eqref{25} and the fact that $u \in \Lp{r}$ for any finite $r \in (1,\infty)$ to get
    \begin{align}\label{26}
	\begin{split}
	    \|uu_h^{\kappa}\|_{p^*}
	    &\leq c_\Omega \|uu_h^{\kappa}\|_{1,p} \leq M_{27} \left((\kappa+1)^{M_{28}}\right) \left[\|uu_h^{\kappa}\|_{\tilde{q}_1}^p+1\right]^{\frac{1}{p}}\\
	    & \leq M_{29} \left((\kappa+1)^{M_{28}}\right) \left[\|u^{\kappa+1}\|_{\tilde{q}_1}^p+1\right]^{\frac{1}{p}}<\infty.
	\end{split}
    \end{align}
    Applying Fatou's Lemma in \eqref{26} implies that
    \begin{align}\label{26b}
	\begin{split}
	    \|u\|_{(\kappa+1)p^*}= \|u^{\kappa+1}\|_{p^*}^{{\frac{1}{\kappa+1}}}
	    &\leq M_{29}^{\frac{1}{\kappa+1}} \left((\kappa+1)^{M_{28}}\right)^{{\frac{1}{\kappa+1}}} \left[\|u^{\kappa+1}\|_{\tilde{q}_1}^p+1\right]^{\frac{1}{(\kappa+1)p}}.
	\end{split}
    \end{align}
    Observe that
    \begin{align*}
	\left((\kappa+1)^{M_{28}}\right)^{\frac{1}{\sqrt{\kappa+1}}} \geq 1
	\qquad \text{and} \qquad
	\lim_{\kappa \to \infty}\left( (\kappa+1)^{M_{28}}\right)^{\frac{1}{\sqrt{\kappa+1}}}=1.
    \end{align*}
    Hence, we find a constant $M_{30}>1$ such that
    \begin{align}\label{27}
	\left((\kappa+1)^{M_{28}}\right)^{\frac{1}{\kappa+1}} \leq M_{30}^{\frac{1}{\sqrt{\kappa+1}}}.
    \end{align}
    From \eqref{26b} and \eqref{27} we derive
    \begin{align}\label{28}
	\begin{split}
	    \|u\|_{(\kappa+1)p^*}
	    &\leq M_{29}^{\frac{1}{\kappa+1}} M_{30}^{\frac{1}{\sqrt{\kappa+1}}}\left[\|u^{\kappa+1}\|_{\tilde{q}_1}^p+1\right]^{\frac{1}{(\kappa+1)p}}.
	\end{split}
    \end{align}
    Now we are ready to prove the uniform boundedness with respect to $\kappa$. To this end, suppose there is a sequence $\kappa_n\to \infty$ such that
    \begin{align*}
        \|u^{\kappa_n+1}\|_{\tilde{q}_1}^p \leq 1,
    \end{align*}
    which is equivalent to
    \begin{align*}
	\|u\|_{(\kappa_n+1)\tilde{q}_1}\leq 1,
    \end{align*}
    then Proposition \ref{proposition_final_interation} implies that $\|u\|_{\infty}<\infty$.

    In the opposite case there exists a number $\kappa_0>0$ such that
    \begin{align}\label{29}
        \|u^{\kappa+1}\|_{\tilde{q}_1}^p >1 \qquad \text{for any }\kappa \geq \kappa_0.
    \end{align}
    Combining \eqref{28} and \eqref{29} yields
    \begin{align}\label{30}
	\begin{split}
	    \|u\|_{(\kappa+1)p^*}
	    &\leq M_{29}^{\frac{1}{\kappa+1}} M_{30}^{\frac{1}{\sqrt{\kappa+1}}}\left [2\|u^{\kappa+1}\|_{\tilde{q}_1}^p\right]^{\frac{1}{(\kappa+1)p}}
	    \leq M_{31}^{\frac{1}{\kappa+1}} M_{30}^{\frac{1}{\sqrt{\kappa+1}}} \|u\|_{(\kappa+1)\tilde{q}_1}
	\end{split}
    \end{align}
    for any $\kappa \geq \kappa_0$. Applying again the bootstrap arguments we define a sequence $(\kappa_n)$ such that
    \begin{align}\label{31}
	\begin{split}
	    & \kappa_1: (\kappa_1+1)\tilde{q}_1 = (\kappa_0+1)p^*,\\
	    & \kappa_2: (\kappa_2+1)\tilde{q}_1 = (\kappa_1+1) p^*,\\
	    & \kappa_3: (\kappa_3+1)\tilde{q}_1 = (\kappa_2+1) p^*,\\
	    & \qquad \vdots \qquad \qquad \qquad \qquad \vdots \quad \, .
	\end{split}
    \end{align}
    By induction, from \eqref{30} and \eqref{31}, we obtain
    \begin{align*}
	\begin{split}
	    \|u\|_{(\kappa_n+1)p^*}
	    &\leq M_{31}^{\frac{1}{\kappa_n+1}} M_{30}^{\frac{1}{\sqrt{\kappa_n+1}}}\left\|u\right\|_{(\kappa_n+1)\tilde{q}_1} =M_{31}^{\frac{1}{\kappa_n+1}} M_{30}^{\frac{1}{\sqrt{\kappa_n+1}}}\left\|u\right\|_{(\kappa_{n-1}+1)p^*}
	\end{split}
    \end{align*}
    for any $n \in \N$, where the sequence $(\kappa_n)$ is chosen in such a way that $(\kappa_n+1)=(\kappa_{0}+1)\left(\frac{p^*}{\tilde{q}_1}\right)^n$. Following this we see that
    \begin{align*}
	\begin{split}
	    \|u\|_{(\kappa_n+1)p^*}
	    & \leq  M_{31}^{\sum\limits_{i=1}^n\frac{1}{\kappa_i+1}} M_{30}^{\sum\limits_{i=1}^n \frac{1}{\sqrt{\kappa_i+1}}}\|u \|_{(\kappa_0+1)p^*}
	\end{split}
    \end{align*}
    with $(\kappa_n+1)p^* \to \infty $ as $n \to \infty$. Since $\frac{1}{\kappa_i+1}=\frac{1}{\kappa_0+1}\left(\frac{\tilde{q}_1}{p^*}\right)^i$ and $\frac{\tilde{q}_1}{p^*}<1$, there is a constant $M_{32}>0$ such that
    \begin{align*}
	\begin{split}
	    \|u\|_{(\kappa_n+1)p^*} \leq M_{32}\|u\|_{(k_0+1)p^*}<\infty,
	\end{split}
    \end{align*}
    where the finiteness of the right-hand side follows from Part I. Now we may apply Proposition \ref{proposition_final_interation} to conclude that $u\in \Linf$, that is, there exists $M>0$, which depends on the given data and on $u$, such that $\|u\|_\infty \leq M$.
    
    {\bf Case II.2: $u\in L^\infty(\partial \Omega)$}
    
    This case follows directly from Case II.1 and Proposition \ref{proposition_boundedness_boundary}.

    Combining Case II.1 and II.2 shows that $u \in L^\infty(\close)$.
\end{proof}

\begin{remark}
    It is clear that hypothesis (H1) is not needed in the proof of Theorem \ref{main_theorem}, but it is necessary to have a well-defined definition of a weak solution. 
\end{remark}

\begin{remark}

Since problem \eqref{problem} involves functions that can exhibit a critical growth, one cannot expect to find a constant $ M$ which depends in an explicit way on natural norms such as $ \|u\|_{p^*} $ or $ \|u\|_{p_*, \rand}$. But, if one searches for a dependence on norms that are greater than the critical ones, then a possible dependence is given on the norms $ \Vert u \Vert_{\frac{p^*-p}{\tilde q_1- p}\tilde q_1 } $ as well as $ \Vert u \Vert_{\frac{p_*-p}{\tilde q_2-p} \tilde q_2, \rand} $, where $ \tilde q_1 \in (p, p^*) $ and $ \tilde q_2 \in (p, p_*) $, as seen in the proof of Theorem \ref{main_theorem}, see \eqref{constants}. 

\end{remark}

Based on the results of Theorem \ref{main_theorem}, we obtain regularity results for solutions of type \eqref{problem}. For simplification we drop the $s$-dependence of the operator. To this end, let $\vartheta \in C^1(0,\infty)$ be a function such that
\begin{align}\label{101}
    0 < a_1  \leq \frac{t \vartheta'(t)}{\vartheta(t)} \leq a_2 \quad \text{ and } \quad a_3 t^{p-1} \leq \vartheta(t) \leq a_4 \l(1+t^{p-1}\r)
\end{align}
for all $t>0$, with some constants $a_i>0$, $i\in\{1,2,3,4\}$ and for $1 <p<\infty$. The hypotheses on $\mathcal{A}: \close \times \R^N \to \R^N$ read as follows.

\begin{enumerate}[leftmargin=1.2cm]
    \item[H($\mathcal{A}$):]
	$\mathcal{A}(x,\xi)=\mathcal{A}_0\l(x,|\xi|\r)\xi$ with $\mathcal{A}_0 \in C(\close \times \R_+)$ for all $\xi \in \R^N$, where $\R_+=[0,+\infty)$ and with $\mathcal{A}_0(x,t)>0$ for all $x\in \close$ and for all $t>0$. Moreover,
	\begin{enumerate}[leftmargin=0.8cm,topsep=0.2cm,itemsep=0.1cm]
	    \item[(i)]
		$\mathcal{A}_0 \in C^1(\close \times (0,\infty))$, $t\to t\mathcal{A}_0(x,t)$ is strictly increasing in $(0,\infty)$, $\displaystyle\lim_{t \to 0^+} t \mathcal{A}_0(x,t)=0$ for all $x\in \close$ and
		\begin{align*}
		    \lim_{t \to 0^+} \frac{t \mathcal{A}_0'(x,t)}{\mathcal{A}_0(x,t)}=c>-1\quad \text{for all }x\in\close;
		\end{align*}
	    \item[(ii)]
		$\displaystyle |\nabla_\xi \mathcal{A}(x,\xi)| \leq a_5 \frac{\vartheta\l(|\xi|\r)}{|\xi|}$ for all $x\in \close$, for all $\xi \in \R^N \setminus \{0\}$ and for some $a_5>0$;
	    \item[(iii)]
		$\displaystyle  \nabla_\xi \mathcal{A}(x,\xi) y \cdot y \geq \frac{\vartheta \l(|\xi|\r)}{|\xi|} |y|^2$ for all $x\in \close$, for all $\xi \in \R^N \setminus\{0\}$ and for all $y \in \R^N$.
	\end{enumerate}
\end{enumerate}

\begin{remark}
    We chose the special structure in H($\mathcal{A}$) to apply the nonlinear re\-gularity theory, which is mainly based on the results of Lieberman \cite{Lieberman-1991} and Pucci-Serrin \cite{Pucci-Serrin-2007}. Closely related to this subject is also the work by Motreanu-Motreanu-Papageorgiou \cite{Motreanu-Motreanu-Papageorgiou-2011}. 
    If we set
    \begin{align*}
	G_0(x,t)=\int_0^t \mathcal{A}_0(x,s)sds,
    \end{align*}
    then $G_0 \in C^1(\close\times \R_+)$ and the function $G_0(x,\cdot)$ is increasing and strictly convex for all $x\in \close$. We set $G(x,\xi)=G_0(x,|\xi|)$ for all $(x,\xi)\in \close \times \R^N$ and obtain that $G\in C^1(\close \times \R^N)$ and that the function $\xi \to G(x,\xi)$ is convex. Moreover, we easily derive that
    \begin{align*}
	\nabla_\xi G(x,\xi)=(G_0)_t'(x,|\xi|)\frac{\xi}{|\xi|}=\mathcal{A}_0(x,|\xi|)\xi=\mathcal{A}(x,\xi)
    \end{align*}
    for all $\xi \in \R^N\setminus \{0\}$ and $\nabla_\xi G(x,0)=0$. So, $G(x,\cdot)$ is the primitive of $\mathcal{A}(x,\cdot)$. This fact, the convexity of $G(x,\cdot)$ and since $G(x,0)=0$ for all $x\in \close$ imply that
    \begin{align}\label{102}
	G(x,\xi)\leq \mathcal{A}(x,\xi)\cdot \xi\quad\text{for all } (x,\xi)\in \close\times \R^N.
    \end{align}
\end{remark}

The next lemma summarizes the main properties of $\mathcal{A}:\close\times \R^N\to \R^N$. The result is an easy consequence of \eqref{101} and the hypotheses H($\mathcal{A}$).

\begin{lemma}
    If hypotheses H($\mathcal{A}$) are satisfied, then the following hold:
    \begin{enumerate}[leftmargin=0.8cm,topsep=0.0cm,itemsep=0.1cm]
	\item[(i)]
	    $\mathcal{A}\in C(\close \times \R^N,\R^N)\cap C^1(\close \times (\R^N \setminus \{0\}),\R^N)$ and the map $\xi \to \mathcal{A}(x,\xi)$ is continuous and strictly monotone (hence, maximal monotone) for all $x\in\close$;
	\item[(ii)]
	    $|\mathcal{A}(x,\xi)| \leq a_6 \l(1+|\xi|^{p-1}\r)$ for all $x\in \close$, for all $\xi \in \R^N$ and for some $a_6>0$;
	\item[(iii)]
	    $\mathcal{A}(x,\xi) \cdot \xi \geq \frac{a_3}{p-1} |\xi|^p$ for all $x \in \close$ and for all $\xi \in \R^N$.
    \end{enumerate}
\end{lemma}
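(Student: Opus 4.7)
\medskip

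\noindent\textbf{Proof plan.}

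The plan is to handle the three items in order, first disposing of (i) by standard arguments and then reducing (ii) and (iii) to the study of a single scalar function obtained from the radial structure $\mathcal{A}(x,\xi) = \mathcal{A}_0(x,|\xi|)\xi$. Define $\varphi_x(t) := t\mathcal{A}_0(x,t)$ for $t>0$, which satisfies $|\mathcal{A}(x,\xi)| = \varphi_x(|\xi|)$ and $\mathcal{A}(x,\xi)\cdot\xi = \varphi_x(|\xi|)\,|\xi|$. By H($\mathcal{A}$)(i), $\lim_{t\to 0^+}\varphi_x(t) = 0$, and writing $\mathcal{A}(x,te) = \varphi_x(t)e$ for unit vectors $e$ and differentiating in $t$, one checks via H($\mathcal{A}$)(ii) and (iii) that
\begin{equation*}
  \frac{\vartheta(t)}{t} \;\le\; \varphi_x'(t) \;\le\; a_5 \frac{\vartheta(t)}{t} \qquad\text{for all } t>0.
\end{equation*}
Integrating from $0$ to $t$ (which is legitimate by the vanishing above) gives the two-sided bound
\begin{equation*}
  \int_0^t \frac{\vartheta(s)}{s}\,ds \;\le\; \varphi_x(t) \;\le\; a_5 \int_0^t \frac{\vartheta(s)}{s}\,ds.
\end{equation*}

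For (i), continuity of $\mathcal{A}$ on $\close\times\R^N$ and $C^1$ regularity on $\close\times(\R^N\setminus\{0\})$ follow directly from the composition structure and the hypotheses on $\mathcal{A}_0$. For strict monotonicity, I would write, whenever the segment $[\eta,\xi]$ avoids the origin,
\begin{equation*}
  (\mathcal{A}(x,\xi)-\mathcal{A}(x,\eta))\cdot(\xi-\eta) = \int_0^1 \nabla_\xi \mathcal{A}(x,\eta+t(\xi-\eta))(\xi-\eta)\cdot(\xi-\eta)\,dt,
\end{equation*}
and apply H($\mathcal{A}$)(iii) together with the positivity $\vartheta(r)/r>0$ for $r>0$ to conclude that the integrand is strictly positive whenever $\xi\neq\eta$. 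The degenerate case where the segment passes through $0$ is handled by a direct computation using $\mathcal{A}(x,\xi) = \mathcal{A}_0(x,|\xi|)\xi$ and the strict monotonicity of $t\mapsto t\mathcal{A}_0(x,t)$ from H($\mathcal{A}$)(i). Maximal monotonicity is then automatic since a continuous monotone operator on a reflexive Banach space is maximal monotone.

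For (iii) I would simply plug the lower bound $\vartheta(s)\ge a_3 s^{p-1}$ from \eqref{101} into the estimate $\varphi_x(t) \ge \int_0^t \vartheta(s)/s\,ds$, obtaining $\varphi_x(t) \ge \frac{a_3}{p-1}\,t^{p-1}$; multiplying by $|\xi|$ yields $\mathcal{A}(x,\xi)\cdot\xi \ge \frac{a_3}{p-1}|\xi|^p$. For (ii) the strategy is the same but the hard part is the control of $\int_0^t \vartheta(s)/s\,ds$ near $s=0$: the crude estimate $\vartheta(s)\le a_4(1+s^{p-1})$ yields $\vartheta(s)/s\le a_4/s + a_4 s^{p-2}$, whose first summand is \emph{not} integrable at zero. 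The resolution comes from the lower logarithmic-derivative bound $a_1\le t\vartheta'(t)/\vartheta(t)$ in \eqref{101}, which implies that the map $t\mapsto \vartheta(t)t^{-a_1}$ is nondecreasing, so $\vartheta(s)\le \vartheta(1)s^{a_1}$ on $(0,1]$ and therefore $\vartheta(s)/s \le \vartheta(1) s^{a_1-1}$, which \emph{is} integrable near $0$ since $a_1>0$. For $s\ge 1$ one uses $\vartheta(s)/s\le 2a_4\,s^{p-2}$ directly from \eqref{101}, so $\int_0^t \vartheta(s)/s\,ds \le C_1 + C_2 t^{p-1}$. This yields $|\mathcal{A}(x,\xi)| = \varphi_x(|\xi|) \le a_6(1+|\xi|^{p-1})$, which is (ii).

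The main obstacle, as just indicated, is the integrability of $\vartheta(s)/s$ near $s=0$; bypassing the naive polynomial bound and exploiting the logarithmic-derivative condition in \eqref{101} is the crucial point, and it explains why both inequalities in \eqref{101} are imposed simultaneously rather than only the polynomial sandwich.
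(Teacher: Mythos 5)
Your proof is correct and complete. The paper itself supplies no argument for this lemma, stating only that it ``is an easy consequence of \eqref{101} and the hypotheses H($\mathcal{A}$),'' so there is no in-text proof to compare against; your write-up is exactly the natural reconstruction. The reduction to the scalar function $\varphi_x(t)=t\mathcal{A}_0(x,t)$ via the radial structure, the two-sided bound $\vartheta(t)/t\le\varphi_x'(t)\le a_5\,\vartheta(t)/t$ obtained by restricting H($\mathcal{A}$)(ii)--(iii) to the ray $\xi=te$ and using $\nabla_\xi\mathcal{A}(x,te)e=\varphi_x'(t)e$, and the subsequent integration from $0$ (legitimate because $\lim_{t\to 0^+}\varphi_x(t)=0$ by H($\mathcal{A}$)(i)) are the right moves. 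You also correctly isolate the one genuinely non-obvious point: the polynomial sandwich $a_3t^{p-1}\le\vartheta(t)\le a_4(1+t^{p-1})$ alone would make $\vartheta(s)/s$ look non-integrable near $s=0$, and it is the lower logarithmic-derivative bound $a_1\le t\vartheta'(t)/\vartheta(t)$, giving $\vartheta(s)\le\vartheta(1)s^{a_1}$ on $(0,1]$ with $a_1>0$, that restores integrability and simultaneously yields an $a_6$ uniform in $x\in\close$, as the statement requires.
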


From this lemma along with \eqref{102} we easily deduce the following growth estimates for the primitive $G(x,\cdot)$.

\begin{corollary}
    If hypotheses H($\mathcal{A}$) hold, then
    \begin{align*}
	\frac{a_3}{p(p-1)}|\xi|^p \leq G(x,\xi) \leq a_7\l(1+|\xi|^p\r)
    \end{align*}
    for all $x\in \close$, for all $\xi \in \R^N$ and for some $a_7>0$.
\end{corollary}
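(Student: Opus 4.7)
The plan is to reduce everything to one-dimensional estimates on $G_0(x,t)$, since $G(x,\xi)=G_0(x,|\xi|)$ by definition, so it suffices to show
\begin{align*}
    \frac{a_3}{p(p-1)}t^p \leq G_0(x,t) \leq a_7(1+t^p) \qquad \text{for all }x\in\close,\, t\geq 0.
\end{align*}
Both bounds will come from integrating pointwise estimates on the integrand $s\mathcal{A}_0(x,s)$ which are already delivered by the preceding lemma.

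For the lower bound, I would start from part (iii) of the previous lemma, $\mathcal{A}(x,\xi)\cdot\xi\geq \frac{a_3}{p-1}|\xi|^p$. Using the structural identity $\mathcal{A}(x,\xi)=\mathcal{A}_0(x,|\xi|)\xi$ and setting $|\xi|=s$, this rewrites as $\mathcal{A}_0(x,s)s^2\geq \frac{a_3}{p-1}s^p$, i.e. $s\mathcal{A}_0(x,s)\geq \frac{a_3}{p-1}s^{p-1}$. Integrating from $0$ to $t$ gives precisely $G_0(x,t)\geq \frac{a_3}{p(p-1)}t^p$, which is the desired lower bound.

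For the upper bound, I would invoke part (ii) of the previous lemma, $|\mathcal{A}(x,\xi)|\leq a_6(1+|\xi|^{p-1})$. Again using $\mathcal{A}(x,\xi)=\mathcal{A}_0(x,|\xi|)\xi$ and $|\xi|=s$, we get $s\mathcal{A}_0(x,s)\leq a_6(1+s^{p-1})$ (note $\mathcal{A}_0\geq 0$ by H($\mathcal{A}$)), and integrating yields
\begin{align*}
    G_0(x,t)\leq a_6 t + \frac{a_6}{p}t^p.
\end{align*}
The only mildly nontrivial step is to absorb the linear term: using the elementary inequality $t\leq 1+t^p$ valid for all $t\geq 0$ and $p>1$ (split into $t\leq 1$ and $t\geq 1$), the right-hand side is bounded by a constant multiple of $1+t^p$, so choosing $a_7=2a_6$ (or any suitable constant) finishes the upper bound.

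I do not anticipate a real obstacle here; the statement is a direct consequence of the lemma together with the definition $G_0(x,t)=\int_0^t \mathcal{A}_0(x,s)s\,ds$. The only bookkeeping issue is the constant-chasing needed to collapse $a_6 t+\frac{a_6}{p}t^p$ into the form $a_7(1+t^p)$, which is handled by the trivial inequality $t\leq 1+t^p$.
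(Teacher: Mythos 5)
Your proof is correct. For the lower bound you integrate the pointwise inequality $s\mathcal{A}_0(x,s)\geq \frac{a_3}{p-1}s^{p-1}$ obtained from part (iii) of the lemma, which is exactly how the paper would obtain $G_0(x,t)\geq \frac{a_3}{p(p-1)}t^p$. For the upper bound you take a slightly different route: the paper explicitly invokes the convexity estimate \eqref{102}, namely $G(x,\xi)\leq \mathcal{A}(x,\xi)\cdot\xi$, and then applies the growth bound (ii) of the lemma and the Cauchy--Schwarz inequality to land at $a_6(|\xi|+|\xi|^p)$; you instead integrate the pointwise bound $s\mathcal{A}_0(x,s)\leq a_6(1+s^{p-1})$ directly from the definition $G_0(x,t)=\int_0^t \mathcal{A}_0(x,s)s\,ds$, arriving at $a_6\,t+\frac{a_6}{p}t^p$. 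Both paths are legitimate; yours bypasses \eqref{102} entirely and in fact yields a marginally sharper constant (since $\int_0^t s^{p-1}\,ds=t^p/p$), while the paper's choice has the virtue of reusing the convexity observation it already recorded. The final absorption step $t\leq 1+t^p$ is the same in both.
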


Let $A : \Wp{p} \to \Wp{p}^*$ be the nonlinear map defined by
\begin{align}\label{operator_representation}
    \left \langle A(u),\ph \right \ran= \into \mathcal{A}(x,\nabla u)\cdot \nabla \ph dx \quad \text{for all } u,\ph \in \Wp{p}.
\end{align}
The next proposition summarizes the main properties of this operator, see Gasi{\'n}ski-Papageorgiou \cite{Gasinski-Papageorgiou-2014}.

\begin{proposition}
    Let the hypotheses H($\mathcal{A}$) be satisfied and let $A : \Wp{p} \to \Wp{p}^*$ be the map defined in \eqref{operator_representation}. Then, $A$ is bounded, continuous, monotone (hence maximal monotone) and of type $(\Ss_+)$.
\end{proposition}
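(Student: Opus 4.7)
The four properties can be verified in the order listed, each one relying on the structural information collected in the preceding Lemma and Corollary.

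For boundedness, I would apply the growth estimate $|\mathcal{A}(x,\xi)| \leq a_6(1+|\xi|^{p-1})$ from part (ii) of the Lemma. Given $u,\varphi\in W^{1,p}(\Omega)$, Hölder's inequality applied to the pairing in \eqref{operator_representation} yields
\[
|\langle A(u),\varphi\rangle|\le a_6\bigl(\,|\Omega|^{1/p}+\|\nabla u\|_p^{p-1}\bigr)\|\nabla\varphi\|_p\le C\bigl(1+\|u\|_{1,p}^{p-1}\bigr)\|\varphi\|_{1,p},
\]
so $A$ sends bounded sets to bounded sets. For continuity, take $u_n\to u$ in $W^{1,p}(\Omega)$; then $\nabla u_n\to\nabla u$ in $L^p(\Omega;\R^N)$, and along a subsequence one has $\nabla u_n\to\nabla u$ pointwise a.e.\ with an $L^p$-dominant. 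The continuity of $\xi\mapsto\mathcal{A}(x,\xi)$ together with the growth bound and the dominated convergence theorem (the standard Krasnoselskii–Nemytskii argument) gives $\mathcal{A}(\cdot,\nabla u_n)\to\mathcal{A}(\cdot,\nabla u)$ in $L^{p'}(\Omega;\R^N)$, and Hölder then shows $A(u_n)\to A(u)$ in $W^{1,p}(\Omega)^*$; since the limit is unique the whole sequence converges.

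Monotonicity is a direct consequence of the pointwise strict monotonicity of $\xi\mapsto\mathcal{A}(x,\xi)$ from part (i) of the Lemma: for all $u,v\in W^{1,p}(\Omega)$,
\[
\langle A(u)-A(v),u-v\rangle=\into\bigl(\mathcal{A}(x,\nabla u)-\mathcal{A}(x,\nabla v)\bigr)\cdot(\nabla u-\nabla v)\,dx\ge 0,
\]
with the integrand nonnegative a.e. Since $A$ is monotone, continuous, and everywhere defined, it is maximal monotone by the Minty–Browder characterization.

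The main technical point is the $(\Ss_+)$ property. Assume $u_n\weak u$ in $W^{1,p}(\Omega)$ and $\lims\langle A(u_n),u_n-u\rangle\le 0$. Since $u_n-u\weak 0$ and $A(u)$ is a fixed functional, $\langle A(u),u_n-u\rangle\to 0$, so by monotonicity
\[
0\le\lims\langle A(u_n)-A(u),u_n-u\rangle\le 0.
\]
Hence the nonnegative integrands $(\mathcal{A}(x,\nabla u_n)-\mathcal{A}(x,\nabla u))\cdot(\nabla u_n-\nabla u)$ converge to $0$ in $L^1(\Omega)$, and along a subsequence also a.e. The strict monotonicity from H($\mathcal{A}$)(iii) then forces $\nabla u_n\to\nabla u$ a.e. The hardest step is upgrading this a.e.\ gradient convergence to strong $L^p$-convergence; this is done in the standard way by using the coercivity $\mathcal{A}(x,\xi)\cdot\xi\ge\frac{a_3}{p-1}|\xi|^p$ from part (iii) of the Lemma to produce an equi-integrable majorant for $|\nabla u_n-\nabla u|^p$ (a Vitali-type argument), or equivalently by invoking the Brezis–Lieb lemma together with $\|\nabla u_n\|_p\to\|\nabla u\|_p$ obtained from the convergence of the pairing. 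Combining with the compact embedding $W^{1,p}(\Omega)\hookrightarrow L^p(\Omega)$, which yields $u_n\to u$ in $L^p(\Omega)$, one concludes $u_n\to u$ in $W^{1,p}(\Omega)$; since the limit is independent of the chosen subsequence, the whole sequence converges, establishing $(\Ss_+)$.
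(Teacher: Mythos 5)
The paper itself does not give a proof of this proposition: it merely records the result and refers the reader to Gasi\'nski--Papageorgiou, \emph{Exercises in Analysis.\ Part 1: Nonlinear Analysis} (2014). Your write-up is therefore more detailed than what appears in the paper, and the argument you sketch is exactly the standard one that the cited reference carries out: the growth bound from Lemma (ii) gives boundedness via H\"older, the Krasnoselskii--Nemytskii theorem gives continuity, pointwise strict monotonicity of $\xi\mapsto\mathcal{A}(x,\xi)$ integrates to monotonicity of $A$, and Minty--Browder yields maximal monotonicity.

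Two small remarks. In the boundedness display the exponent on $|\Omega|$ should be $1/p'=(p-1)/p$, not $1/p$, since $\int_\Omega|\nabla\varphi|\,dx\le|\Omega|^{1/p'}\|\nabla\varphi\|_p$; this does not affect the conclusion. In the $(\Ss_+)$ part, the step from ``$(\mathcal{A}(x,\nabla u_n)-\mathcal{A}(x,\nabla u))\cdot(\nabla u_n-\nabla u)\to 0$ a.e.'' to ``$\nabla u_n\to\nabla u$ a.e.'' is correct but deserves a line: at a point $x$ where this holds, coercivity (Lemma (iii)) first forces $\{\nabla u_n(x)\}$ to be bounded (otherwise the pairing would blow up by the lower bound $\mathcal{A}(x,\xi)\cdot\xi\ge\frac{a_3}{p-1}|\xi|^p$ dominating the cross terms), and then strict monotonicity identifies every subsequential limit with $\nabla u(x)$. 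After that, the Vitali argument you mention (using equi-integrability of $|\nabla u_n|^p$ coming from $\int_\Omega\mathcal{A}(x,\nabla u_n)\cdot\nabla u_n\,dx\to\int_\Omega\mathcal{A}(x,\nabla u)\cdot\nabla u\,dx$) is the cleanest route to $\nabla u_n\to\nabla u$ in $L^p$; together with $u_n\to u$ in $L^p$ by Rellich--Kondrachov this gives $u_n\to u$ in $W^{1,p}(\Omega)$. As a plan, your proof is sound and matches the standard argument the paper delegates to the textbook.
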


Let us state some operators which fit in our setting and which are of much interest.
\begin{example}
    For simplicity, we drop the $x$-dependence of the operator $\mathcal{A}$. The following maps satisfy hypotheses H($\mathcal{A}$):
    \begin{enumerate}[leftmargin=0.8cm]
	\item[(i)]
	    Let $\mathcal{A}(\xi)=|\xi|^{p-2}\xi$ with $1<p<\infty$. This map corresponds to the $p$-Laplace differential operator defined by
	    \begin{align*}
		\Delta_p u=\divergenz \left(|\nabla u|^{p-2} \nabla u \right) \quad \text{for all } u \in W^{1,p}(\Omega).
	    \end{align*}
	    The potential is $G(\xi)=\frac{1}{p}|\xi|^p$ for all $\xi \in \R^N$.
	\item[(ii)]
	    The function $\mathcal{A}(\xi)=|\xi|^{p-2}\xi+\mu |\xi|^{q-2}\xi$ with $1<q<p<\infty$ and $\mu>0$ compares with the $(p,q)$-differential operator defined by $\Delta_p u+ \mu \Delta_q u$ for all $u \in W^{1,p}(\Omega)$. The potential is $G(\xi)=\frac{1}{p}|\xi|^p+\frac{\mu}{q}|\xi|^q$ for all $\xi \in \R^N$.
	\item[(iii)]
	    If $\mathcal{A}(\xi)=\left(1+|\xi|^2\right)^{\frac{p-2}{2}}\xi$ with $1<p<\infty$, then this map represents the generalized $p$-mean curvature differential operator defined by
	    \begin{align*}
		\divergenz \left [(1+|\nabla u|^2)^{\frac{p-2}{2}} \nabla u \right] \quad \text{ for all } u \in W^{1,p}(\Omega).
	    \end{align*}
	    The potential is $G(\xi)=\frac{1}{p}\left[(1+|\xi|^2)^{\frac{p}{2}} -1 \right]$ for all $\xi \in \R^N$.
    \end{enumerate}
\end{example}
Let us write hypotheses (H) without the structure conditions on $\mathcal{A}$.

\begin{enumerate}[leftmargin=1.5cm]
    \item[H($\mathcal{B},\mathcal{C}$):]
	The functions $\mathcal{B}: \Omega \times \R \times \R^N \to \R$ and $\mathcal{C}: \partial \Omega \times \R \to \R$ are Carath\'eodory functions satisfying the following structure conditions:
	\begin{align*}
	    |\mathcal{B}(x,s,\xi)| & \leq b_1|\xi|^{p\frac{q_1-1}{q_1}}+b_2|s|^{q_1-1}+b_3,  \qquad \text{ for a.a.\,}   x\in \Omega,\\
	    |\mathcal{C}(x,s)| &\leq c_1|s|^{q_2-1}+c_2, \qquad \text{ for a.a.\,} x\in \rand,
	\end{align*}
	for all $s \in \R$, for all $\xi \in \R^N$, with positive constants $b_j, c_k$ $(j \in \{1,2,3\}$, $k\in\{1,2\})$ and fixed numbers $p, q_1, q_2 $ such that
	\begin{align*}
	  1<p<\infty, \qquad p\leq q_1 \leq p^*, \qquad p\leq q_2 \leq p_*
	\end{align*}
	with the critical exponents stated in \eqref{critical_exponent_domain} and \eqref{critical_exponent_boundary}. Moreover, $\mathcal{C}$ satisfies the condition
	\begin{align*}
	    \left|\mathcal{C}(x,s)-\mathcal{C}(y,t)\right| \leq L \left[|x-y|^{\alpha}+|s-t|^\alpha \right], \quad |\mathcal{C}(x,s)| \leq L
	\end{align*}
	for all $(x,s), (y,t) \in \partial \Omega \times [-M_0,M_0]$ with $\alpha \in (0,1]$ and constants $M_0>0$ and $L \geq 0$.

\end{enumerate}

Based on the hypotheses H($\mathcal{A}$) and H($\mathcal{B},\mathcal{C}$), problem \eqref{problem} becomes
\begin{align}\label{problem2}
    \begin{aligned}
        -\divergenz  \mathcal{A}(x,\nabla u) & = \mathcal{B}(x,u,\nabla u)  & \hspace*{0.5cm} & \text{ in } \Om, \\
    \mathcal{A}(x,\nabla u) \cdot \nu & = \mathcal{C}(x,u)  && \text{ on } \partial \Omega.
    \end{aligned}
\end{align}

Combining Theorem \ref{main_theorem} and the regularity theory of Lieberman \cite{Lieberman-1991} leads to the following result.

\begin{theorem}
  Let $\Omega \subset \R^N$, $N>1,$ be a bounded domain with a $C^{1,\alpha}$-boundary $\rand$ and let the assumptions H($\mathcal{A}$) and H($\mathcal{B}, \mathcal{C}$) be satisfied. Then, every weak solution $u\in\Wp{p}$ of problem \eqref{problem2} belongs to $C^{1,\beta}(\close)$ for some $\beta \in (0,1)$ such that $\beta=\beta(a_1,a_2,a_5, \alpha, N)$ and 
  \begin{equation*}
      \| u \|_{C^{1, \beta}(\close)}  \leq C(a_1, a_2, a_3, a_5, N, \vartheta(1), M, \alpha, b_1, b_2, b_3)
      \end{equation*}
  where $M$ is the constant that comes from the statement of Theorem \ref{main_theorem}.
\end{theorem}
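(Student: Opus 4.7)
The plan is to combine the a~priori $L^\infty$-bound from Theorem \ref{main_theorem} with the boundary $C^{1,\beta}$-regularity theory of Lieberman \cite{Lieberman-1991} for quasilinear equations with a conormal/oblique boundary condition. Concretely, I would first apply Theorem \ref{main_theorem} to any weak solution $u \in W^{1,p}(\Omega)$ of \eqref{problem2} to produce a constant $M>0$, depending on the given data and on $u$, such that $\|u\|_{L^\infty(\close)} \leq M$. Using this $L^\infty$-bound, I would then rewrite the problem as an equation of the form $-\divergenz \mathcal{A}(x,\nabla u) = \tilde{\mathcal{B}}(x,\nabla u)$ with conormal datum $\tilde{\mathcal{C}}(x) := \mathcal{C}(x,u(x))$, where $\tilde{\mathcal{B}}(x,\nabla u) := \mathcal{B}(x,u(x),\nabla u)$.

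Next I would verify that the reduced problem fits the hypotheses of Lieberman's regularity theorem. For the principal part $\mathcal{A}$, the structure conditions H($\mathcal{A}$), together with \eqref{101} and the lemma and corollary that follow, provide exactly the $\vartheta$-ellipticity and $\vartheta$-growth bounds (together with the differentiability in $\xi$) required by Lieberman. For the right-hand side, substituting $|u(x)| \leq M$ into H($\mathcal{B},\mathcal{C}$) yields
\begin{align*}
|\tilde{\mathcal{B}}(x,\nabla u)| \leq b_1 |\nabla u|^{p(q_1-1)/q_1} + b_2 M^{q_1-1} + b_3,
\end{align*}
and since $p(q_1-1)/q_1 < p$ the growth in $\nabla u$ is strictly sub-natural, so that the right-hand side sits comfortably in Lieberman's framework. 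For the boundary term, Proposition \ref{proposition_boundedness_boundary} ensures that the trace of $u$ also stays within $[-M,M]$, so setting $M_0 = M$ in H($\mathcal{B},\mathcal{C}$) yields the Hölder bound in $(x,s)$ and the uniform bound on $\mathcal{C}$ on $\partial\Omega \times [-M,M]$ that Lieberman requires for the oblique problem.

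Once these hypotheses are checked, I would invoke the main $C^{1,\beta}$-estimate of Lieberman \cite{Lieberman-1991} directly to conclude that $u \in C^{1,\beta}(\close)$ with $\beta \in (0,1)$ depending only on $a_1,a_2,a_5,\alpha,N$ (the ratio/growth constants of $\vartheta$, the boundary regularity exponent and the dimension), together with the quantitative estimate
\begin{align*}
\|u\|_{C^{1,\beta}(\close)} \leq C\bigl(a_1,a_2,a_3,a_5,N,\vartheta(1),M,\alpha,b_1,b_2,b_3\bigr),
\end{align*}
the constant $M$ carrying the dependence on $u$ and on $c_1,c_2$ through the boundary hypothesis.

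The main obstacle is not conceptual but bookkeeping: one must carefully match the form of H($\mathcal{A}$) and of the reduced $\tilde{\mathcal{B}}, \tilde{\mathcal{C}}$ to Lieberman's precise structure hypotheses, in particular checking that the sub-natural growth exponent $p(q_1-1)/q_1$ is admissible and that the Hölder condition on $\mathcal{C}$ (which is only \emph{local} in the $s$-variable) can indeed be applied globally thanks to the uniform $L^\infty(\partial\Omega)$-bound supplied by Proposition \ref{proposition_boundedness_boundary}. The rest is a direct citation of the boundary regularity theorem.
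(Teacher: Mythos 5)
Your proposal follows essentially the same route as the paper: obtain $\|u\|_\infty \le M$ from Theorem \ref{main_theorem}, verify that H($\mathcal{A}$), \eqref{101} and H($\mathcal{B},\mathcal{C}$) combined with this $L^\infty$-bound supply Lieberman's structure conditions (1.10a)--(1.10d) plus the H\"older condition on the conormal datum, and then cite Lieberman \cite[Theorem 1.7]{Lieberman-1991} for the global $C^{1,\beta}$-estimate. The only cosmetic difference is that you freeze the $s$-variable to $u(x)$, whereas the paper checks the conditions directly on $\mathcal{A}(x,\xi)$ and $\mathcal{B}(x,s,\xi)$ using $|s|\le M$; this is immaterial.
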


\begin{proof}
    We will apply Theorem 1.7 of Lieberman \cite{Lieberman-1991} and the comment after this theorem concerning global H\"older gradient estimates. First, we know from Theorem \ref{main_theorem} that $\|u\|_\infty \leq M$. The only thing we need to do is to check that the conditions (1.10a)--(1.10d) in \cite[p.\,320]{Lieberman-1991} are satisfied. From conditions H($\mathcal{A}$)(iii), (ii) we see that the assumptions (1.10a) and (1.10b) are satisfied. Moreover, from H($\mathcal{B}, \mathcal{C}$) and \eqref{101} we obtain
    \begin{align*}
	|\mathcal{B} (x, s, \xi) | 
	&\leq b_1 | \xi |^{p \frac{q_1- 1}{q_1}}+ b_2 | s |^{q_1- 1}+ b_3 \\
	& \leq b_1 | \xi |^p+ b_1+ b_2 M^{q_1- 1}+ b_3 \\
	& = b_1 |\xi|^{p-1} |\xi |+ b_1+ b_2 M^{q_1- 1}+ b_3 \\
	& \leq \frac{b_1}{a_3} \vartheta(|\xi|) |\xi |+ b_1+ b_2 M^{q_1- 1}+ b_3 \\
	& \leq\max\left\{\frac{b_1}{a_3}, b_1+ b_2 M^{q_1- 1}+ b_3\right\} \left(\vartheta(|\xi|) | \xi|+ 1\right).
    \end{align*}
    This proves condition (1.10d). Assumption (1.10c) follows from the fact that the function $\mathcal{A}$ is continuous differentiable in the space variable and independent of the $s$-variable. Then we may apply the mean value theorem which shows (1.10c). The desired result follows from Lieberman \cite[Theorem 1.7]{Lieberman-1991} with the constants $\beta$, $C$ as in the theorem (and their dependence on the data) and the constant $M$ from Theorem \ref{main_theorem}.
\end{proof}

\section*{Acknowledgment}

The authors wish to thank the referees for their corrections and insightful remarks that helped to improve the paper.

This work has been performed in the framework of Piano della Ricerca 2016-2018--linea di intervento 2: ``Metodi variazionali ed equazioni differenziali''. The first author was partially supported by Gruppo Nazionale per l'Analisi Matema\-tica, la Probabilit\`a e le loro Applicazioni (INdAM). The second author thanks the University of Catania for the kind hospitality during a research stay in March 2018.

\end{document}